\newtheorem*{thm*}{Theorem A}
\newtheorem{thm}{Theorem}
\newtheorem{dfn}{Definition}
\newtheorem{lemma}{Lemma}
\newtheorem{remark}{Remark}
\newtheorem{cor}{Corollary}
\newtheorem{prop}{Proposition}
\newtheorem*{ack}{Acknowledgment}
\begin{document}

\def\d{ \partial_{x_j} }
\def\Na{{\mathbb{N}}}

\def\Z{{\mathbb{Z}}}

\def\IR{{\mathbb{R}}}

\newcommand{\E}[0]{ \varepsilon}

\newcommand{\la}[0]{ \lambda}

\newcommand{\s}[0]{ \mathcal{S}}

\newcommand{\AO}[1]{\| #1 \| }

\newcommand{\BO}[2]{ \left( #1 , #2 \right) }

\newcommand{\CO}[2]{ \left\langle #1 , #2 \right\rangle}

\newcommand{\R}[0]{ \IR\cup \{\infty \} }

\newcommand{\co}[1]{ #1^{\prime}}

\newcommand{\p}[0]{ p^{\prime}}

\newcommand{\m}[1]{   \mathcal{ #1 }}

\newcommand{ \W}[0]{ \mathcal{W}}

\newcommand{ \A}[1]{ \left\| #1 \right\|_H }

\newcommand{\B}[2]{ \left( #1 , #2 \right)_H }

\newcommand{\C}[2]{ \left\langle #1 , #2 \right\rangle_{  H^* , H } }

 \newcommand{\HON}[1]{ \| #1 \|_{ H^1} }

\newcommand{ \Om }{ \Omega}

\newcommand{ \pOm}{\partial \Omega}

\newcommand{\D}{ \mathcal{D} \left( \Omega \right)}

\newcommand{\DP}{ \mathcal{D}^{\prime} \left( \Omega \right)  }

\newcommand{\DPP}[2]{   \left\langle #1 , #2 \right\rangle_{  \mathcal{D}^{\prime}, \mathcal{D} }}

\newcommand{\PHH}[2]{    \left\langle #1 , #2 \right\rangle_{    \left(H^1 \right)^*  ,  H^1   }    }

\newcommand{\PHO}[2]{  \left\langle #1 , #2 \right\rangle_{  H^{-1}  , H_0^1  }}

 \newcommand{\HO}{ H^1 \left( \Omega \right)}

\newcommand{\HOO}{ H_0^1 \left( \Omega \right) }

\newcommand{\CC}{C_c^\infty\left(\Omega \right) }

\newcommand{\N}[1]{ \left\| #1\right\|_{ H_0^1  }  }

\newcommand{\IN}[2]{ \left(#1,#2\right)_{  H_0^1} }

\newcommand{\INI}[2]{ \left( #1 ,#2 \right)_ { H^1}}

\newcommand{\HH}{   H^1 \left( \Omega \right)^* }

\newcommand{\HL}{ H^{-1} \left( \Omega \right) }

\newcommand{\HS}[1]{ \| #1 \|_{H^*}}

\newcommand{\HSI}[2]{ \left( #1 , #2 \right)_{ H^*}}

\newcommand{\WO}{ W_0^{1,p}}
\newcommand{\w}[1]{ \| #1 \|_{W_0^{1,p}}}

\newcommand{\ww}{(W_0^{1,p})^*}

\newcommand{\Ov}{ \overline{\Omega}}

\date{}

\title{Liouville theorems for stable Lane-Emden systems and biharmonic problems}
\author{Craig Cowan\\
{\it\small Department of Mathematical Sciences}\\
{\it\small University of Alabama in Huntsville}\\
{\it\small 258A Shelby Center}\\
\it\small Huntsville, AL 35899 \\
{\it\small ctc0013@uah.edu} }

\maketitle


\vspace{3mm}

\begin{abstract}    We examine the elliptic system given by
\begin{equation} \label{system_abstract}
 -\Delta u = v^p, \qquad -\Delta v = u^\theta, \qquad \mbox{ in } \IR^N,
\end{equation}
 for $ 1 < p \le \theta$ and the fourth order scalar equation
\begin{equation} \label{fourth_abstract}
\Delta^2 u = u^\theta, \qquad \mbox{in $ \IR^N$,}
\end{equation}  where $ 1 < \theta$.  We prove various Liouville type theorems for positive stable solutions.   For instance we show  there are no positive stable solutions of (\ref{system_abstract}) (resp. (\ref{fourth_abstract})) provided $ N \le 10$ and $ 2 \le p \le \theta$ (resp. $ N \le 10$ and  $1 < \theta$).  Results for higher dimensions are also obtained.

 These results regarding stable solutions on the full space imply various  Liouville  theorems for positive (possibly unstable) bounded solutions of
 \begin{equation} \label{eq_half_abstract}
  -\Delta u = v^p, \qquad -\Delta v = u^\theta, \qquad \mbox{ in } \IR^{N-1},
  \end{equation}  with $ u=v=0$ on $ \partial \IR^N_+$.   In particular there is no positive bounded solution of (\ref{eq_half_abstract}) for any $ 2  \le p \le \theta$ if $ N \le 11$.  Higher dimensional results are also obtained.

 \end{abstract}

\noindent
{\it \footnotesize 2010 Mathematics Subject Classification: 35J61, 35J47.}   {\scriptsize } \\
\noindent
{\it \footnotesize Key words: Biharmonic, entire solutions, Liouville theorems, Stability, Lane-Emden Systems, Half-space}. {\scriptsize }

\section{Introduction}


In this article we examine the nonexistence of positive classical stable solutions of
 the system given by
\begin{equation} \label{eq}
-\Delta u = v^p, \qquad -\Delta v = u^\theta, \qquad \mbox{ in } \IR^N,
\end{equation}
where  $ 1 <  p \le  \theta$.   We also examine the nonexistence of positive classical stable solutions of the fourth order  equation given by
\begin{equation} \label{fourth}
\Delta^2 u = u^\theta \qquad \mbox{ in } \IR^N,
\end{equation}    where $ \theta>1$.

 We now define the notion of a stable solution and for this we prefer to examine a slight generalization of (\ref{eq}) given by
\begin{equation} \label{eq_2}
-\Delta u =  f(v), \qquad -\Delta v = g(u), \qquad \mbox{ in $\IR^N$,}
\end{equation} where $ f,g$ are positive and increasing  on $(0,\infty)$.

\begin{dfn}
  We say a smooth positive solution $(u,v)$ of  (\ref{eq_2}) is stable provided
there exists $ 0 < \zeta, \chi$ smooth with
\begin{equation} \label{stand}
-\Delta \zeta  = f'(v) \chi, \qquad -\Delta \chi =  g'(u) \zeta \quad \mbox{in $ \IR^N$}.
\end{equation}

\end{dfn}
This definition is motivated from \cite{Mont}, also see (\ref{mont_sta}).

\begin{remark}  \label{equi} Note that the standard notion of a stable positive   solution of $ \Delta^2 u = u^\theta$ in $ \IR^N$, is that
\begin{equation} \label{standard}
\int \theta u^{\theta-1} \gamma^2 \le \int (\Delta \gamma)^2,
\end{equation} for all $ \gamma \in C_c^\infty(\IR^N)$.
 For our approach we prefer to recast (\ref{fourth}) into the framework of (\ref{eq}). So towards this suppose $ 1 < \theta$ and $ 0 < u $ is a smooth solution of (\ref{fourth}). Define $ v:=-\Delta u$. By  \cite{Wei_10} $ v >0$ and hence $(u,v):=(u, -\Delta u)$ is a smooth positive solution of (\ref{eq}) with $p=1$.  One now has two options for the notion of the stability of (\ref{fourth}).  Either one views the equation as a scalar equation and uses the standard notion (\ref{standard}),  when we do this we will say $u$ is a stable solution of (\ref{fourth}) or  we view the solution as a solution of the system and we use the  notion defined in (\ref{stand}),  when we do this we will say $(u,v)$ is a stable solution of (\ref{eq}) with $p=1$.   See Lemma \ref{equivalence} for a relationship between these notions of stability.
\end{remark}

We define some parameters before stating our main results.
Given $ 1 \le p \le \theta$ we define
\[ t_0^-:= \sqrt{ \frac{p \theta (p+1)}{\theta+1}} - \sqrt{  \frac{p \theta (p+1)}{\theta+1} - \sqrt{ \frac{p \theta (p+1)}{\theta+1}}},\]
\[ t_0^+:= \sqrt{ \frac{p \theta (p+1)}{\theta+1}} + \sqrt{  \frac{p \theta (p+1)}{\theta+1} - \sqrt{ \frac{p \theta (p+1)}{\theta+1}}}.\]  Properties of $ t_0^-,t_0^+$: \\
(i) $ t_0^- \le 1 \le t_0^+$ and these inequalities are strict except when $ p=\theta=1$. \\
(ii) $t_0^-$ is decreasing and $ t_0^+$ is increasing in $ z:= \frac{p \theta (p+1)}{\theta+1}$ and $ \lim_{z \rightarrow \infty} t_0^-=\frac{1}{2}$.   \\

\noindent We now state out main theorem.

\begin{thm}   (Lane-Emden System) \label{MAIN}
\begin{enumerate}

\item  Suppose $ 2 \le p \le \theta$ and
\begin{equation} \label{cond_syst}
N <2 + \frac{4( \theta+1)}{p \theta-1} t_0^+.
\end{equation}
  Then there is no positive stable solution of (\ref{eq}).  In particular there is no positive stable solution of (\ref{eq}) for any  $ 2 \le p \le \theta$ if $ N \le 10$; see Remark \ref{computations}.

\item  Suppose $  1 < p \le \theta$, $ 2 t_0^- <p$ and   (\ref{cond_syst}) holds.
Then there is no positive stable solution $(u,v)$ of (\ref{eq}).

\end{enumerate}
\end{thm}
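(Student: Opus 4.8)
\noindent The plan is to run a Farina-type integral-estimate argument directly on the system. The first step is to extract from the stability assumption a usable integral inequality. Given a positive stable solution $(u,v)$ of (\ref{eq}), pick $\zeta,\chi>0$ as in (\ref{stand}), so that $-\Delta\zeta=pv^{p-1}\chi$ and $-\Delta\chi=\theta u^{\theta-1}\zeta$. Testing the first relation against $\varphi^2/\zeta$ for $\varphi\in C_c^\infty(\IR^N)$ and using $2\varphi\,\zeta^{-1}\nabla\zeta\cdot\nabla\varphi\le|\nabla\varphi|^2+\varphi^2\zeta^{-2}|\nabla\zeta|^2$ gives $p\int v^{p-1}(\chi/\zeta)\varphi^2\le\int|\nabla\varphi|^2$, and symmetrically $\theta\int u^{\theta-1}(\zeta/\chi)\varphi^2\le\int|\nabla\varphi|^2$; adding and using $\sqrt{ab}\le\tfrac12(a+b)$ yields the system stability inequality
\[
\sqrt{p\theta}\int_{\IR^N}u^{\frac{\theta-1}{2}}\,v^{\frac{p-1}{2}}\,\varphi^2\;\le\;\int_{\IR^N}|\nabla\varphi|^2\qquad\text{for all }\varphi\in C_c^\infty(\IR^N),
\]
which plays the role of (\ref{standard}) here.

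Next I fix a cutoff $\eta$ with $\eta\equiv1$ on $B_R$, $\operatorname{supp}\eta\subset B_{2R}$, $|\nabla\eta|\le C/R$, and exponents $s,t>\tfrac12$, $m\ge1$, related by $s(p-1)=t(\theta-1)$ (this alignment will be used below; note it degenerates when $p=1$, which is exactly why the biharmonic case must be routed through Lemma \ref{equivalence}). Multiplying $-\Delta u=v^p$ by $u^{2s-1}v^{2t}\eta^{2m}$ and $-\Delta v=u^\theta$ by $u^{2s}v^{2t-1}\eta^{2m}$ and integrating by parts gives two identities expressing the monomial integrals $D=\int u^{2s-1}v^{2t+p}\eta^{2m}$ and $E=\int u^{2s+\theta}v^{2t-1}\eta^{2m}$ through the ``energy'' integrals $A=\int u^{2s-2}v^{2t}|\nabla u|^2\eta^{2m}$, $B=\int u^{2s}v^{2t-2}|\nabla v|^2\eta^{2m}$, the cross integral $C=\int u^{2s-1}v^{2t-1}(\nabla u\cdot\nabla v)\eta^{2m}$ and lower-order terms carrying a factor $\nabla\eta$. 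Inserting $\varphi=u^sv^t\eta^m$ into the stability inequality gives $\sqrt{p\theta}\,F\le s^2A+t^2B+2stC+(\nabla\eta\text{-terms})$ with $F=\int u^{2s+\frac{\theta-1}{2}}v^{2t+\frac{p-1}{2}}\eta^{2m}$, and note that the exponent vector of $F$ is the average of those of $D$ and $E$, so $F\le\sqrt{DE}$ by Cauchy--Schwarz, all three integrals carrying the homogeneity dictated by the expected asymptotics $u\sim v^{(p+1)/(\theta+1)}$.

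Using the two identities to eliminate $A$ and $B$, and dealing with the cross term $C$ by combining $C^2\le AB$ with those identities and completing the square, the whole estimate collapses (after using the alignment $s(p-1)=t(\theta-1)$ and $F\le\sqrt{DE}$) to an inequality of the scalar-like shape
\[
\Big(\sqrt{p\theta}-\Phi(\tau)\Big)\,F\;\le\;(\nabla\eta\text{-terms}),
\]
where $\tau$ is the single remaining free parameter and $\Phi$ an explicit rational function --- the exact analogue of Farina's scalar reduction $\big(p-\tfrac{\gamma^2}{2\gamma-1}\big)\int u^{p+2\gamma-1}\eta^{2m}\le(\nabla\eta\text{-terms})$. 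A short computation shows $\sqrt{p\theta}-\Phi(\tau)>0$ precisely when $\tau$ lies strictly between the two roots of $\tau^2-2\sqrt z\,\tau+\sqrt z=0$ with $z=\tfrac{p\theta(p+1)}{\theta+1}$, i.e.\ when $\tau\in(t_0^-,t_0^+)$. For such $\tau$ one absorbs the cutoff terms: they are bounded by $\epsilon F+CR^{-2}\int_{B_{2R}}u^{2s}v^{2t}$, and a Hölder estimate on $B_{2R}$ followed by Young's inequality gives $R^{-2}\int_{B_{2R}}u^{2s}v^{2t}\le\epsilon' F+CR^{N-\mu}$ with $\mu=\mu(\tau)$ explicit and strictly increasing in $\tau$. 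Taking $\tau\uparrow t_0^+$ gives $\mu\uparrow 2+\frac{4(\theta+1)}{p\theta-1}t_0^+$, so under (\ref{cond_syst}) there is a choice of $\tau$ with $\mu>N$; then $F\le CR^{N-\mu}\to0$ as $R\to\infty$, while $F\ge\int_{B_1}u^{2s+\frac{\theta-1}{2}}v^{2t+\frac{p-1}{2}}>0$ for $R\ge1$ --- a contradiction. This proves Part 2. In Part 1 the hypothesis $2t_0^-<p$ is redundant: the side conditions needed above --- that $s,t>\tfrac12$ and that the exponents appearing in the integrations by parts and the Hölder step stay $\ge1$ --- hold automatically once $p\ge2$, whereas for $1<p<2$ they reduce to exactly $t_0^-<p/2$; the numerical claim $N\le10$ then follows by estimating $\mu$, cf.\ Remark \ref{computations}.

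The step I expect to be the main obstacle is the reduction in the third paragraph: pinning down the precise linear combination of the two integration-by-parts identities and the tested stability inequality that eliminates the cross term $C$ into a manifestly nonnegative quadratic form, and simultaneously tracking how the saved exponent $\mu$ depends on $(s,t)$ so that its optimization genuinely produces $t_0^+$. The secondary difficulty is the bookkeeping of the $\nabla\eta$-terms --- every such contribution must be reabsorbed without eroding $\mu$ --- which is most delicate in the sub-quadratic regime $1<p<2$, where some powers of $u$ or $v$ dip below $1$ and force the extra hypothesis $2t_0^-<p$.
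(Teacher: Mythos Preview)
Your strategy is genuinely different from the paper's, and the decisive ingredient you are missing is the Souplet pointwise comparison (Lemma \ref{pointwise}): any positive solution of (\ref{eq}) with $1<p\le\theta$ satisfies $(\theta+1)v^{p+1}\ge(p+1)u^{\theta+1}$ throughout $\IR^N$. In the paper this is precisely what produces the constant $z=\frac{p\theta(p+1)}{\theta+1}$ and hence $t_0^\pm$. One tests the stability inequality with $\phi=v^t\gamma$ (a function of $v$ alone), rewrites the left-hand side as $\sqrt{p\theta}\int u^{(\theta-1)/2}v^{(p+1)/2}v^{2t-1}\gamma^2$, and invokes the Souplet bound to replace $u^{(\theta-1)/2}v^{(p+1)/2}$ by $\sqrt{\tfrac{p+1}{\theta+1}}\,u^\theta$. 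Combining with the identity obtained from $-\Delta v=u^\theta$ tested against $v^{2t-1}\gamma^2$ yields the clean scalar-type inequality $\big(\sqrt z-\tfrac{t^2}{2t-1}\big)\int u^\theta v^{2t-1}\gamma^2\le C\int v^{2t}|\nabla\gamma|^2$, and $t_0^\pm$ are exactly the roots of $t^2-2\sqrt z\,t+\sqrt z$. There is no cross term because $u$ never enters the test function. The paper then feeds this estimate through an $L^1$ elliptic-regularity iteration (Proposition \ref{initial}, Corollary \ref{gone}) and closes with the Serrin--Zou bound $\int_{B_R}v^p\le CR^{N-2-2(p+1)/(p\theta-1)}$ (Lemma \ref{Mid_lemma}); it is this combination, not a one-shot Farina estimate, that manufactures the threshold $2+\tfrac{4(\theta+1)}{p\theta-1}t_0^+$.

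Your two-variable test $\varphi=u^sv^t\eta^m$ forces you to handle the cross integral $C=\int u^{2s-1}v^{2t-1}\nabla u\cdot\nabla v\,\eta^{2m}$. After substituting the equation identities for $A$ and $B$ into the stability bound, the coefficient of $C$ is $2st\,\frac{1-s-t}{(2s-1)(2t-1)}$, which does not vanish under your alignment $s(p-1)=t(\theta-1)$ except when $s+t=1$; the inequality $C^2\le AB$ is then circular because $A,B$ have already been expressed through $C$. More to the point, nothing in your outline introduces the factor $\tfrac{p+1}{\theta+1}$, so the assertion that $\Phi$ has roots $t_0^\pm$ with $z=\tfrac{p\theta(p+1)}{\theta+1}$ is unsupported --- in the paper that factor comes solely from Souplet. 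And even granting a clean main-term reduction, your reabsorption step (H\"older then Young on $R^{-2}\int u^{2s}v^{2t}$, with the alignment) gives $\mu=2+\tfrac{8t}{p-1}$ rather than $2+\tfrac{4(\theta+1)}{p\theta-1}t$, so the final dimension threshold would not match (\ref{cond_syst}). In short, the reduction you flag as the main obstacle is a real gap: without the pointwise comparison the argument does not land on $t_0^+$ or on the stated condition.
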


\begin{thm} \label{MAIN_four} (Fourth Order Scalar Equation)  Suppose that $ 1=p < \theta$ and
\begin{equation} \label{four_Extremal}
N < 2 + \frac{4(\theta+1)}{\theta-1} t_0^+.
\end{equation}   Then there is no positive stable solution of (\ref{eq}).    In particular there is no positive stable solution of (\ref{eq}), when $p=1$,  for any $ 1 < \theta$  if $ N \le 10$.

\end{thm}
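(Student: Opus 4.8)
The plan is to treat the fourth order equation $\Delta^2 u = u^\theta$ as the special case $p=1$ of the Lane-Emden system $-\Delta u = v^p$, $-\Delta v = u^\theta$, so that Theorem~\ref{MAIN_four} becomes essentially a corollary of the machinery developed for Theorem~\ref{MAIN}. First I would invoke Remark~\ref{equi}: given a positive smooth solution $u$ of $\Delta^2 u = u^\theta$, set $v := -\Delta u$; by \cite{Wei_10} we have $v>0$, so $(u,v)$ solves (\ref{eq}) with $p=1$, $f(v)=v$, $g(u)=u^\theta$. The subtlety flagged in Remark~\ref{equi} is that ``stable'' for the scalar problem means the quadratic form inequality (\ref{standard}), while ``stable'' for the system means the existence of a positive supersolution pair $(\zeta,\chi)$ to the linearized system (\ref{stand}); so the first real step is Lemma~\ref{equivalence}, which must say that scalar stability of $u$ implies (or is equivalent to) system stability of $(u,v)$. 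Once that bridge is in place, Theorem~\ref{MAIN_four} reduces to: ``no positive stable solution of (\ref{eq}) with $p=1$ under the dimension bound $N < 2 + \frac{4(\theta+1)}{\theta-1} t_0^+$,'' which is exactly condition (\ref{cond_syst}) specialized to $p=1$ (note $\frac{4(\theta+1)}{p\theta-1} = \frac{4(\theta+1)}{\theta-1}$ when $p=1$).

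So the core of the proof is to run the nonexistence argument for the system at $p=1$. The standard scheme for stable solutions of Lane-Emden type systems is: (a) from the stability inequality choated from (\ref{stand})—testing the linearized equations against cutoff functions of $u,v$ or powers thereof—derive an integral inequality of the form $\int u^{\theta} (\text{stuff}) \lesssim \text{boundary terms}$; (b) combine this with the equations themselves to get, for a range of exponents $t$, a bound $\int_{B_R} u^{\theta + t} + v^{p+t/p}\,\lesssim\, R^{N - 2\cdot(\text{scaling weight})}$ controlled by the two roots $t_0^\pm$ of the relevant quadratic (the quadratic $t^2 - 2zt + z = 0$ with $z = \frac{p\theta(p+1)}{\theta+1}$, whose roots are precisely $t_0^\pm$); (c) push $t$ up toward $t_0^+$ and use the dimension restriction to make the exponent of $R$ negative, forcing $u \equiv v \equiv 0$ by sending $R\to\infty$. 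For $p=1$ this machinery is already contained in the proof of Theorem~\ref{MAIN} part (2) provided the hypothesis $2t_0^- < p$ is met—so I would check whether $2 t_0^- < 1$ automatically holds when $p=1$. By property (i), $t_0^- < 1$ strictly for $(p,\theta)\neq(1,1)$, and by property (ii) $t_0^-$ is decreasing in $z$ with limit $\tfrac12$; at $p=1$ one has $z = \frac{2\theta}{\theta+1} \in (1,2)$, and a direct check that $t_0^- < \tfrac12$ on this range (equivalently $z > \sqrt z$, i.e. $z>1$, combined with the explicit formula) should give $2t_0^- < 1 = p$, so part (2) of Theorem~\ref{MAIN} applies verbatim.

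Concretely the steps in order: (1) reduce via Remark~\ref{equi} and Lemma~\ref{equivalence} to system stability of $(u,-\Delta u)$ with $p=1$; (2) verify $2t_0^- < p = 1$ for $\theta>1$ using properties (i)--(ii) of $t_0^\pm$ and the explicit formula, so that the hypotheses of Theorem~\ref{MAIN}(2) are satisfied once (\ref{four_Extremal}) holds (which is (\ref{cond_syst}) at $p=1$); (3) conclude no positive stable solution exists, i.e. $u\equiv 0$; (4) for the ``$N\le 10$'' claim, observe that $t_0^+ > 1$ always, hence $2 + \frac{4(\theta+1)}{\theta-1}t_0^+ > 2 + \frac{4(\theta+1)}{\theta-1} \ge 2 + \frac{4\cdot 2}{\text{large }\theta}$... more carefully, the worst case is $\theta\to\infty$ where $\frac{4(\theta+1)}{\theta-1}\to 4$ and $t_0^+\to\infty$, while for moderate $\theta$ the coefficient is large; one checks $\frac{4(\theta+1)}{\theta-1}t_0^+ > 8$ for all $\theta>1$, giving the dimension bound $N<10$, i.e. $N\le 10$ (details deferred to the analogue of Remark~\ref{computations}).

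The main obstacle I anticipate is step (2) together with the ``endpoint'' behavior of the integral estimates: at $p=1$ the system is genuinely degenerate (the nonlinearity $f(v)=v$ is linear, so $f'(v)=1$ and one of the stability relations in (\ref{stand}) becomes $-\Delta\zeta = \chi$), and one must make sure the iteration/bootstrap that produces the $R$-power bound does not lose its grip in this limit — in particular that the admissible exponent range $(2t_0^-, \, \text{something})$ for $t$ remains nonempty and reaches close enough to $t_0^+$ to beat the dimension. If Theorem~\ref{MAIN}(2) is stated and proved uniformly down to $p=1$, this is immediate; if the proof of Theorem~\ref{MAIN} secretly used $p>1$ somewhere (e.g. to control a term by $v^{p-1}$), then one would need to redo that estimate at $p=1$ by hand, using $-\Delta u = v$ directly rather than as a power relation — this is the one place where a separate argument, presumably the content of Lemma~\ref{equivalence} plus a short additional computation, is really needed.
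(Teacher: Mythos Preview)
Your step (2) contains a genuine error that breaks the whole plan. You claim $2t_0^- < 1$ when $p=1$, $\theta>1$, but in fact $t_0^- > \tfrac{1}{2}$ strictly for every finite $z>1$: the inequality $t_0^- < \tfrac{1}{2}$ reads $\sqrt{z} - \tfrac{1}{2} < \sqrt{z-\sqrt{z}}$, and squaring (legitimate since $\sqrt{z}>\tfrac12$) gives $z - \sqrt{z} + \tfrac{1}{4} < z - \sqrt{z}$, i.e.\ $\tfrac{1}{4} < 0$. (Property (ii) only says $t_0^- \searrow \tfrac12$ as $z\to\infty$; it never goes below.) So the hypothesis $2t_0^- < p$ of Theorem~\ref{MAIN}(2) \emph{fails} at $p=1$ for every $\theta>1$, and you cannot invoke that result. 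This is not a removable technicality: the iteration behind Theorem~\ref{MAIN} is launched at $t_0 = p/2$, and when $p=1$ that starting value $t_0=\tfrac12$ lies outside the admissible window $(t_0^-,t_0^+)$ required by Lemma~\ref{one}.

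The paper therefore treats $p=1$ by a genuinely different route. The iteration (Corollary~\ref{gone}, part 2) is started at $t_0=1$, which is always admissible since $t_0^-<1<t_0^+$. But then the closing step needs a base estimate on $\int_{B_R} v^{2}$ rather than on $\int_{B_R} v^{p}=\int_{B_R} v$, and the Serrin--Zou bound of Lemma~\ref{Mid_lemma} is too weak. The missing ingredient is Lemma~\ref{Wei_dong_init}, the Wei--Ye estimate $\int_{B_R} v^{2} \le C R^{\,N-4-8/(\theta-1)}$, which is a result about \emph{scalar} stable solutions of $\Delta^2 u = u^\theta$. To invoke it one has to pass from system stability of $(u,v)$ to scalar stability of $u$; that is precisely what Lemma~\ref{equivalence} does, and note the direction is the \emph{opposite} of what you guessed (system $\Rightarrow$ scalar, not scalar $\Rightarrow$ system --- recall the theorem is already stated for stable solutions of the system (\ref{eq}), so no reduction to system stability is needed). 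With the Wei--Ye bound in hand, one combines it with Corollary~\ref{gone}(2) and finishes exactly as in the end of the proof of Theorem~\ref{MAIN}; the exponent of $R$ is negative precisely when $N < 2 + \frac{4(\theta+1)}{\theta-1} t_0^+$.
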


We now turn our attention to the case of half space.    Consider the Lane-Emden system given by
\begin{equation} \label{eq_half}
\left\{ \begin{array}{rll}
\hfill   -\Delta u    &=& v^p \qquad \;  \mbox{ in } \IR^N_+   \\
\hfill -\Delta v &=&  u^\theta \qquad \; \mbox{ in } \IR^N_+,    \\
\hfill u &=& v =0 \quad  \mbox{ on } \partial \IR^N_+,
\end{array}\right.
\end{equation}   where $ 1 < p \le \theta$.

\begin{remark}    This is an updated version of the original work which contained results only regarding stable solutions on the full space.     All results on the half space,  in particular  Theorem \ref{system_thm_half}, did not appear in the original work.   Since the original work appeared  there have been many very nice  improvements, extensions and or related works.   In \cite{new_3} the range of exponents in Theorem \ref{MAIN_four} is improved.   In \cite{new_1} they examine (\ref{eq})  but without any stability assumptions.  They obtain optimal results regarding the existence versus nonexistence of positive radial solutions of (\ref{eq}).   In \cite{new_2} the problem $ \Delta^2 u = |u|^{p-1} u$ in $ \IR^N$ is examined.  They give a complete classification of stable and finite Morse index solutions (no positivity assumptions).

\end{remark}

\noindent We now state our main theorem.

\begin{thm}  \label{system_thm_half}   (Lane-Emden System in $\IR_+^N$)
\begin{enumerate}
\item  Suppose $ 2 \le p \le \theta$ and
\begin{equation} \label{cond_syst_Half}
N-1 <2 + \frac{4( \theta+1)}{p \theta-1} t_0^+.
\end{equation}
  Then there is no positive bounded  solution of (\ref{eq_half}).  In particular there is no positive bounded solution of (\ref{eq_half}) for any  $ 2 \le p \le \theta$ if $ N \le 11$; see Remark \ref{computations}.

\item  Suppose $  1 < p \le \theta$, $ 2 t_0^- <p$ and   (\ref{cond_syst}) holds.
Then there is no positive bounded solution  of (\ref{eq_half}).

\end{enumerate}
\end{thm}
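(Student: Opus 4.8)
The plan is to deduce Theorem \ref{system_thm_half} from the full-space Theorem \ref{MAIN} applied in dimension $N-1$, by first establishing monotonicity of a positive bounded solution in the normal variable and then passing to a limit. Let $(u,v)$ be a positive bounded solution of (\ref{eq_half}). Since the right-hand sides $v^p, u^\theta$ are bounded and $\partial\IR^N_+$ is flat with zero Dirichlet data, interior and boundary elliptic estimates make $(u,v)$ a classical solution with bounded first derivatives, and $u,v>0$ in $\IR^N_+$ by the strong maximum principle. The system is cooperative and $t\mapsto t^p$, $t\mapsto t^\theta$ are increasing on $(0,\infty)$, so the moving-plane method in the half-space --- following Berestycki--Caffarelli--Nirenberg and Dancer, and using the boundedness of $(u,v)$ to build the comparison barriers needed near infinity --- gives $\partial_{x_N}u>0$ and $\partial_{x_N}v>0$ throughout $\IR^N_+$. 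I expect this monotonicity step to be the main obstacle: on a domain that is unbounded in every direction there is no decay to exploit, and the sliding/moving-plane procedure from infinity must be carried out carefully for the coupled system.

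Granting monotonicity, the limits $\bar u(x'):=\lim_{x_N\to\infty}u(x',x_N)$ and $\bar v(x'):=\lim_{x_N\to\infty}v(x',x_N)$ exist by boundedness; translating in $x_N$ and using local elliptic estimates together with a bootstrap, $u(\cdot,\cdot+k)\to\bar u$ and $v(\cdot,\cdot+k)\to\bar v$ in $C^2_{loc}(\IR^N)$, so $(\bar u,\bar v)$ is a bounded solution of (\ref{eq}) in $\IR^{N-1}$. Moreover it is positive: monotonicity forces $\bar u(x')\ge u(x',1)>0$, and similarly $\bar v>0$ (equivalently, were one component identically zero the other would be a bounded harmonic function, hence constant, forcing both to vanish).

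It remains to show $(\bar u,\bar v)$ is stable. Differentiating (\ref{eq_half}) in $x_N$, the positive functions $\zeta:=\partial_{x_N}u$ and $\chi:=\partial_{x_N}v$ solve the linearized system $-\Delta\zeta=pv^{p-1}\chi$, $-\Delta\chi=\theta u^{\theta-1}\zeta$ in $\IR^N_+$. From the ground-state substitution $\int\frac{-\Delta\zeta}{\zeta}\psi^2\le\int|\nabla\psi|^2$, the analogous inequality obtained from the equation for $\chi$, and the pointwise arithmetic--geometric mean inequality, one obtains $\int_{\IR^N_+}\sqrt{p\theta}\,u^{(\theta-1)/2}v^{(p-1)/2}\psi^2\le\int_{\IR^N_+}|\nabla\psi|^2$ for every $\psi\in C_c^\infty(\IR^N_+)$. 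Testing with $\psi(x',x_N)=\phi(x')\,\eta\big(\tfrac{x_N-k}{R}\big)$, where $\phi\in C_c^\infty(\IR^{N-1})$ and $\eta\in C_c^\infty(\IR)$, then letting $k\to\infty$ (so that $u,v\to\bar u,\bar v$ on the support) and afterwards $R\to\infty$ (so that the contribution of $\eta'$ is absorbed), yields $\int_{\IR^{N-1}}\sqrt{p\theta}\,\bar u^{(\theta-1)/2}\bar v^{(p-1)/2}\phi^2\le\int_{\IR^{N-1}}|\nabla\phi|^2$ for all such $\phi$. By the Montenegro-type characterization of stability (cf. (\ref{mont_sta})), this means $(\bar u,\bar v)$ is a stable solution of (\ref{eq}) in $\IR^{N-1}$.

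Finally, condition (\ref{cond_syst_Half}) is exactly (\ref{cond_syst}) with $N$ replaced by $N-1$, and the hypotheses on $p,\theta$ and on $t_0^-$ are the same in both settings; hence Theorem \ref{MAIN}(1), respectively Theorem \ref{MAIN}(2), applied in dimension $N-1$ contradicts the existence of the positive stable solution $(\bar u,\bar v)$ just constructed, proving both parts. The displayed special case follows since $N\le 11$ gives $N-1\le 10$, a range in which the corresponding full-space nonexistence already holds.
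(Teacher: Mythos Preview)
Your approach is essentially the paper's: moving planes for monotonicity, passage to the limit $(\bar u,\bar v)$ in $\IR^{N-1}$, a separated test function $\phi(x')\eta(x_N)$ to pass the stability-type inequality to the limit, and then an appeal to Theorem~\ref{MAIN} in dimension $N-1$. The order of limits differs (you send $k\to\infty$ then $R\to\infty$; the paper does the opposite), but both work.

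There is, however, one genuine logical slip. You claim that the inequality $\int_{\IR^{N-1}}\sqrt{p\theta}\,\bar u^{(\theta-1)/2}\bar v^{(p-1)/2}\phi^2\le\int_{\IR^{N-1}}|\nabla\phi|^2$ implies that $(\bar u,\bar v)$ is stable ``by the Montenegro-type characterization of stability (cf.~(\ref{mont_sta}))''. This is not established anywhere: Lemma~\ref{stabb} shows stability in the sense of Definition~1 \emph{implies} (\ref{second}), but no converse is proved, and (\ref{mont_sta}) does not provide one. The paper is explicit about this: it does \emph{not} claim $(\bar u,\bar v)$ is stable, but instead observes that the only place stability enters the proof of Theorem~\ref{MAIN} is through inequality~(\ref{second}), so having (\ref{second}) for $(\bar u,\bar v)$ suffices to run that argument. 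You should make the same observation rather than invoke a nonexistent equivalence; with that fix your proof goes through.
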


 \begin{remark}  \label{computations}

 We are interested in obtaining lower bounds on the right hand side of (\ref{cond_syst}), in the case where $ 2 \le p \le \theta$,  and so we set  $ f(p,\theta):= \frac{4(\theta+1)}{p \theta-1} t_0^+$.  We rewrite $f$ using the change   of variables $ z= \frac{p \theta(p+1)}{\theta+1}$ to arrive at
 \[ \tilde{f}(p,z)=  \frac{4p}{z-p} \left( \sqrt{z}+ \sqrt{ z - \sqrt{z}} \right),\] and the transformed domain is given by
 \[ \mathcal{D}=\{ (p,z): p \ge 2, \; p^2 \le z \le p^2+p \}.\]
 A computer algebra system easily shows that $ \tilde{f}>8$ on $\partial \mathcal{D}$.    Note that $ \partial_p \tilde{f} > 0$ on $ \mathcal{D}$ and so  we have  $ \tilde{f} >8$ on $ \mathcal{D}$ which gives us the desired result.

 \end{remark}

There has been much work done on the existence and nonexistence of positive classical solutions of the Lane-Emden equation given by
\begin{equation} \label{lane_class}
-\Delta u = u^\theta, \qquad \mbox{in $ \IR^N$,}
\end{equation}  for instance see
 \cite{Caf}, \cite{chen}, \cite{gidas},\cite{Gidas}. It is known that there are no positive classical  solutions of (\ref{lane_class}) provided that
\[ 1<  \theta < \frac{N+2}{N-2},\] and in the case of $ N =2$ there is no positive solution for any $ \theta>1$.   We further remark that this is an optimal result.  It is well known that a Liouville theorem related to (\ref{lane_class}) implies apriori estimates of solutions of the same equation on a bounded domain, see \cite{Caf}, \cite{gidas}.
 We remark that this equation originally appeared in astrophysics, where it was a model for the gravitational potential of a Newtonian self-gravitating, spherically symmetric, polytropic fluid.

In \cite{Wang_solo} and \cite{Gui_Ni_Wang} parabolic versions of (\ref{lane_class}) were examined and in particular they were interested in the asymptotics.   They also obtained various properties of solutions of (\ref{lane_class}) and in particular their results easily imply  that for
 $ N \ge 11$ and $ \theta \ge \theta_{JL}$, where $\theta_{JL}$ is the so called Joseph-Lundgren exponent,  there exists positive stable radial solutions of (\ref{lane_class}).

In \cite{farina} the equation
\begin{equation} \label{far}
-\Delta u = |u|^{\theta-1} u, \qquad \mbox{in $ \IR^N$,}
\end{equation} was examined.  They completely classified the finite Morse index solutions of (\ref{far}).   It was shown there exists nontrivial finite Morse index solutions of (\ref{far}) if and only if $ N \ge 11$ and $ \theta \ge \theta_{JL}$.

In \cite{Wei_dong} the nonexistence of stable solutions  of (\ref{fourth}) was examined.     It was shown that there is no positive stable  solutions of (\ref{fourth}) provided either: $ N \le 8$ or   $ N \ge 9$ and $ 1 < \theta < \frac{N}{N-8} + \E_N$ where $ \E_N $ is some positive, but unknown parameter.  We also alert the reader to the work of \cite{Warn},  where the nonexistence of stable solutions of $ \Delta^2 u = f(u)$ in $ \IR^N$ was examined for general nonlinearities $f$.  Many interesting results were obtained.

As mentioned above Liouville theorems are extremely useful for the existence of apriori estimates of solutions on bounded domains.    The nonexistence of nontrivial stable solutions of $ -\Delta u = g(u)$ in $ \IR^N$ is closely related to the regularity of the  extremal solution associated with
\begin{equation*}
(Q)_\lambda \qquad  \left\{
\begin{array}{ll}
-\Delta u =\lambda f(u) &\hbox{in }\Omega \subset \subset \IR^N, \\
u =0 &\hbox{on } \pOm
\end{array}
\right.
\end{equation*}    where $ \lambda $ is a positive parameter and $ f(u)$ is a nonlinearity which is related to $g$.   Here the extremal solution $u^*$ is the minimal solution, ie. smallest in the pointwise sense,  of $ (Q)_{\lambda^*}$ where $ \lambda^*$ is the largest parameter $\lambda$ such that $(Q)_\lambda$ has a weak solution.    The critical fact in proving the regularity of $u^*$ in certain cases is the fact that $ u^*$ is a stable solution of $(Q)_{\lambda^*}$.  See \cite{bcmr,BV,Cabre,CC,advection,CR,EGG,GG,Martel,MP,Nedev}  for results concerning $(Q)_\lambda$.

We now examine some bounded domain analogs of (\ref{eq}) and (\ref{fourth}).
We begin by examining
 \begin{eqnarray*}
(P)_{\lambda,\gamma}\qquad  \left\{ \begin{array}{lcl}
\hfill   -\Delta u    &=& \lambda f(v)\qquad \Omega  \\
\hfill -\Delta v &=& \gamma g(u)   \qquad \Omega,  \\
\hfill u &=& v =0 \qquad \pOm,
\end{array}\right.
  \end{eqnarray*}  where $ \Omega$ is a bounded domain in $ \IR^N$ and where $f,g$ are smooth, positive increasing nonlinearities which are superlinear at $ \infty$.    Set    $ \mathcal{Q}=\{ (\lambda,\gamma): \lambda, \gamma >0 \}$,  \;
$ \mathcal{U}:= \left\{ (\lambda,\gamma) \in \mathcal{Q}: \mbox{ there exists a smooth solution $(u,v)$ of $(P)_{\lambda,\gamma}$} \right\},$   and  set
 $ \Upsilon:= \partial \mathcal{U} \cap \mathcal{Q}$.  Note $\Upsilon$ plays the role of the extremal parameter $ \lambda^*$ in the case of the system.    Using monotonicity one can define
    an extremal solution $(u^*,v^*)$ for each $ (\lambda^*,\gamma^*) \in \Upsilon$.

       To show the regularity of $(u^*,v^*)$ we will need to use the minimality of the minimal solutions
 to obtain added regularity.   In \cite{Mont} a generalization of $(P)_{\lambda,\gamma}$ was examined and various properties we obtained.   One important result was that the minimal solutions $(u,v)=(u_{\lambda,\gamma}, v_{\lambda,\gamma})$ were stable in the sense that there was some nonnegative constant $ \eta$ and $ 0 < \zeta,\chi \in H_0^1(\Omega)$ such that
 \begin{equation} \label{mont_sta}  -\Delta \zeta = \lambda f'(v) \chi + \eta \zeta, \quad -\Delta \chi = \gamma g'(u) \zeta + \eta \chi, \quad \mbox{in $ \Omega$.}
\end{equation}
It is precisely this result that motivates our definition of a stable solution of (\ref{eq}).   Until recently it was not known hot to utilize the stability of solutions to obtain results regarding the regularity of the extremal solutions associated with the system $(P)_{\lambda,\gamma}$,  except in very special cases.  For instance in \cite{craig0} results were obtained in the case of $ f(v)=e^v$, $ g(u)=e^u$.   Very recently, in \cite{craig_lane}, the regularity of the extremal solutions associated with $(P)_{\lambda,\gamma}$ was examined in  the case of $ f(v)=(v+1)^p$ and $ g(u)=(u+1)^\theta$ where $ 1 < p \le \theta$.  It was shown that the associated extremal solutions were bounded provided condition   (\ref{cond_syst}) holds.  In \cite{craig_four} the fourth order problem
 \begin{eqnarray*}
(N)_{\lambda}\qquad  \left\{ \begin{array}{lcl}
\hfill   \Delta^2 u    &=& \lambda f(u)\qquad \quad  \Omega,  \\
\hfill u &=& \Delta u =0 \qquad \pOm,
\end{array}\right.
  \end{eqnarray*}  where $ \Omega $ is a bounded domain in $ \IR^N$ was examined in the case where $ f(u)=e^u$ and $ f(u)=(u+1)^\theta$.   In the case of $ f(u)=e^u$,  the previous best known result was from \cite{craig1} where it was shown that $ u^*$ was bounded provided $ N \le 8$.  In \cite{craig_four} this was improved to $ N \le 10$  but this still falls far short of the expected optimal result of $ N \le 12$ after one considers the results of \cite{DDGM} on radial domains.
  \begin{remark} Two weeks after this work was made available online we received the manuscript \cite{NEW_GELF} where
they also use this new idea of system stability for a scalar equation, see Lemma 2 in \cite{craig_four} or Lemma \ref{stabb} in the current work.  They examined $(N)_\lambda$ and (\ref{fourth}) in the case where the nonlinearity is given by $f(u)=e^u$.  In the case of the bounded domain they obtained an optimal result.
\end{remark}

   In \cite{craig_four} we showed the extremal solution associated with $ (N)_{\lambda}$ in the case of $ f(u)=(u+1)^\theta$ is bounded  provided condition (\ref{four_Extremal}) holds.  Again these were major improvement over the best known previous results, again from \cite{craig1},  but fall short of the expected optimal results, see \cite{DFG} for the radial case.  For more works related to $(N)_\lambda$ see \cite{BG,CDG,CEG,GW}.

  Note that our results in the current work are expected after viewing the recent works on the regularity of the extremal solutions on bounded domains.   It should be noted that even if this is to be expected,  these  results are not straightforward adaptions of the regularity results on bounded domains.  We finish off by mentioning  three related works on systems.   The first two results \cite{fg,fazly_system} deal with elliptic systems and stability.
  The third work, \cite{fazly}, has many results.
     One of the results is the nonexistence of nontrivial stable solutions of
  \[ -\Delta u = |x|^\alpha v^p, \qquad -\Delta v = |x|^\beta u^\theta \qquad \mbox{ in $\IR^N$,}\]  under certain restrictions of the involved parameters.  For this result the methods developed in the current work are extended to handle the case of nonzero $ \alpha$ and $ \beta$.

\subsection{The half space}

We are also interested in Liouville theorems related to positive bounded solutions of (\ref{eq_half}).     There has been much work done on these and related equations see, for instance, \cite{half_1,half_2,half_3,quittner,half_4}.     The best known result to date regarding a Liouville theorem for positive bounded solutions of  (\ref{eq_half}) is given by \cite{quittner}.      Theorem \ref{system_thm_half}  improves this non existence result.

\subsection{A brief outline of the approach}

Here we give a brief outline of the approach we take.  Suppose $ (u,v)$ is a smooth positive stable solution of (\ref{eq}) with $ 2 < p < \theta$.   We begin by showing that stability implies  \[ \sqrt{p \theta} \int u^\frac{\theta-1}{2} v^\frac{p-1}{2} \phi^2 \le \int | \nabla \phi|^2,\] for all $ \phi \in C_c^\infty(\IR^N)$.   Using as a test function $ \phi=v^t \gamma$ where $ \gamma \in C_c^\infty(\IR^N)$ and using the pointwise comparison $ (\theta +1) v^{p+1} \ge (p+1) u^{\theta+1}$ in $ \IR^N$,  which holds without stability; see \cite{souplet_4}, we obtain an inequality roughly of the form
\begin{equation} \label{ten}
 \int u^\theta v^{2t-1} \gamma^2 \le C_t \int v^{2t} | \nabla \gamma|^2,
\end{equation}  for all $ t_0^- <t<t_0^+$.    One also has the following integral estimates, see \cite{Mid}
\begin{equation} \label{twenty}
 \int_{B_R} v^p \le C R^{N-2- \frac{2(p+1)}{p \theta-1}},
\end{equation}
 which also holds without the stability assumption.     As a first attempt we assume that $ t_0^- < \frac{p}{2} < t_0^+$ and so we can take $ 2t =p$ in (\ref{ten}) and  we assume that $ 0 \le \gamma \le 1$ with $ \gamma=1 $ in $ B_R$ and is compactly supported in $ B_{2R}$. We then use (\ref{ten}) and (\ref{twenty}) to see that
 \[ \int_{B_R} u^\theta v^{p-1} \le \frac{C}{R^2} \int_{B_{2R}} v^p \le C R^{ N-4- \frac{2(p+1)}{p \theta-1}},\]
for all $ R >0$.  Provided this exponent is negative then we get a contradiction by sending $ R \rightarrow \infty$.   Note this implies there is no positive stable solution of (\ref{eq}) for $ N \le 4$ for any $ 2 \le p \le \theta$.  Now since we expect $v$ to decay to zero at $\infty$ one would expect to obtain better results if we can choose $ t > \frac{p}{2}$.
To do this we examine the elliptic equation which $ v^p$ satisfies and we use $L^1$ elliptic regularity theory along with (\ref{ten}) and (\ref{twenty}) to obtain integral estimates on $ v^{p \alpha}$ for $1 < \alpha < \frac{N}{N-2}$.  We can iterate this procedure, as long as the range of $t$ in (\ref{ten}) allows,  looking at increasing powers of $ v$ to obtain integral estimates of $v$ for higher powers.  These higher power integral estimates allow one to obtain better results.

\begin{ack} We would like to thank Philippe Souplet for bringing to our attention the work \cite{PHAN}.    In a previous version of this paper we had needed to assume that either: $ 2 \le \theta$ or $ u$ is bounded in Lemma \ref{pointwise}.
\end{ack}

\section{Proof of Theorem \ref{MAIN} and \ref{MAIN_four}.}

We begin with some integral estimates which are valid for any positive solution of (\ref{eq}).

\begin{lemma} \label{Mid_lemma} \emph{\cite{Mid}}   Suppose $(u,v)$ is a positive solution of (\ref{eq}) with $ 1 \le p \le \theta$.  Then
\[ \int_{B_R} v^p \le CR^{N-2- \frac{2(p+1)}{p \theta-1}}, \]
\[ \int_{B_R} u^\theta  \le CR^{N-2- \frac{2(\theta+1)}{p \theta-1}}.\]

\end{lemma}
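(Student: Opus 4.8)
These are the standard integral bounds obtained by the rescaled test function (nonlinear capacity) method of Mitidieri and Pohozaev, and the plan is to exploit the exact scaling invariance of the system. First I would fix a cutoff $\xi = \xi_R \in C_c^\infty(B_{2R})$ with $0 \le \xi \le 1$, $\xi \equiv 1$ on $B_R$, $|\nabla \xi| \le C/R$ and $|\Delta\xi| \le C/R^2$, and use the test function $\psi := \xi^\ell$ for a large fixed integer $\ell$, so that $|\Delta\psi| \le CR^{-2}\xi^{\ell-2}$. Testing each equation against $\psi$ and integrating by parts twice (legitimate since $(u,v)$ is smooth and $\psi$ is compactly supported) gives
\[ \int v^p\psi \le \int u\,|\Delta\psi| \le \frac{C}{R^2}\int_{B_{2R}} u\,\xi^{\ell-2}, \qquad \int u^\theta\psi \le \int v\,|\Delta\psi| \le \frac{C}{R^2}\int_{B_{2R}} v\,\xi^{\ell-2}. \]

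Assuming first that $p>1$, I would bound the right-hand sides by H\"older's inequality, $\int u\,\xi^{\ell-2} \le \big(\int u^\theta \xi^{(\ell-2)\theta}\big)^{1/\theta}|B_{2R}|^{(\theta-1)/\theta}$ and $\int v\,\xi^{\ell-2} \le \big(\int v^p \xi^{(\ell-2)p}\big)^{1/p}|B_{2R}|^{(p-1)/p}$, choosing $\ell$ large enough that $(\ell-2)\theta\ge\ell$ and $(\ell-2)p\ge\ell$, so that the cutoff powers can be absorbed back into $\psi=\xi^\ell$ using $0\le\xi\le1$. Writing $I:=\int v^p\psi$ and $J:=\int u^\theta\psi$, this yields $I \le CR^{-2+N(\theta-1)/\theta}J^{1/\theta}$ and $J \le CR^{-2+N(p-1)/p}I^{1/p}$; substituting the second inequality into the first gives $I \le CR^{\alpha}I^{1/(p\theta)}$ with $\alpha=-2-\tfrac{2}{\theta}+\tfrac{N(\theta-1)}{\theta}+\tfrac{N(p-1)}{p\theta}$. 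Since $I<\infty$ by smoothness and $p\theta>1$, I can divide through to obtain $I \le CR^{\alpha p\theta/(p\theta-1)}$, and an elementary computation gives $\tfrac{\alpha p\theta}{p\theta-1}=N-2-\tfrac{2(p+1)}{p\theta-1}$; since $\xi\equiv1$ on $B_R$ this is the first estimate. Feeding this bound back into $J \le CR^{-2+N(p-1)/p}I^{1/p}$ and simplifying the resulting exponent (using $\tfrac2p+\tfrac{2(p+1)}{p(p\theta-1)}=\tfrac{2(\theta+1)}{p\theta-1}$) produces the second estimate.

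For the borderline case $p=1$ the single H\"older step is not available for $\int v\,\xi^{\ell-2}$, so I would instead iterate the Laplacian once more: since $-\Delta u = v$ one has $\int v\,\xi^{\ell-2}=\int u(-\Delta\xi^{\ell-2})\le CR^{-2}\int u\,\xi^{\ell-4}$, and then H\"older in the exponent $\theta$ closes the argument exactly as above (equivalently, one tests $\Delta^2 u=u^\theta$ against $\psi$ and integrates by parts four times). The remaining computation is identical and again reproduces the stated exponents at $p=1$.

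I do not expect a genuine conceptual obstacle; the delicate points are purely bookkeeping: checking that the final power collapses to exactly $N-2-\tfrac{2(p+1)}{p\theta-1}$ (which it must, by the scaling of the system), fixing $\ell$ large enough to absorb all cutoff powers, and justifying the division by $I$ and $J$ via their finiteness together with $p\theta>1$. Throughout, the constant $C$ must be kept independent of $R$, which is precisely what allows $R\to\infty$ to be used later.
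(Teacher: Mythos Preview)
Your argument is correct: this is the standard rescaled test-function (nonlinear capacity) proof, and your bookkeeping with the exponents checks out in both the $p>1$ and $p=1$ regimes. Note, however, that the paper does not actually give a proof of this lemma; it is quoted directly from Serrin--Zou \cite{Mid} (and is implicit in Mitidieri's work), so there is no in-text argument to compare against. What you have written is essentially the original proof from that reference.
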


A crucial ingredient in our proof of Theorem \ref{MAIN} is given by the following pointwise comparisons.

\begin{lemma}  \label{pointwise} \begin{enumerate} \item  \cite{PHAN}, \cite{souplet_4} Suppose that $(u,v)$ is a smooth solution of (\ref{eq}) and $ 1 < p \le \theta$.
  Then
\begin{equation} \label{point}
(\theta +1)  v^{p+1} \ge (p+1) u^{\theta+1} \qquad \mbox{ in $ \IR^N$.}
\end{equation}

\item \cite{Wei_dong} Suppose that $u$ is a smooth stable solution of (\ref{eq}) with  $ 1=p < \theta$.    Then there exists a  smooth positive stable bounded solution of (\ref{eq}), which we denote by $ \tilde{u}$ and which satisfies
\begin{equation} \label{point_four}
(\theta +1)  \tilde{v}^{2} \ge (p+1) \tilde{u}^{\theta+1} \qquad \mbox{ in $ \IR^N$,}
\end{equation}
  where $ \tilde{v}:= - \Delta \tilde{u}>0$.

\end{enumerate}

\end{lemma}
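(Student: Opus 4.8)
The plan is to derive both comparisons from one maximum-principle argument, using the a priori integral estimates of Lemma \ref{Mid_lemma} to handle the non-compactness of $\IR^N$.

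\emph{Part (1).} Set $\alpha := \frac{\theta+1}{p+1}$ (so $\alpha\ge 1$) and $c := \big(\frac{p+1}{\theta+1}\big)^{1/(p+1)}$, so that $c^{p+1}\alpha=1$, and put $w := c\,u^\alpha - v$; then (\ref{point}) is precisely the assertion $w\le 0$ on $\IR^N$. By convexity of $t\mapsto t^\alpha$ on $(0,\infty)$ (here $\alpha\ge 1$) one has $-\Delta(u^\alpha)\le \alpha u^{\alpha-1}(-\Delta u)=\alpha u^{\alpha-1}v^p$, so on the open set $\{w>0\}$, where $v<c u^\alpha$ and hence $v^p<c^p u^{\alpha p}$,
\[
-\Delta w \;=\; c\,(-\Delta u^\alpha)-u^\theta \;\le\; c\alpha u^{\alpha-1}v^p-u^\theta \;<\; c^{p+1}\alpha\, u^\theta-u^\theta \;=\; 0
\]
(using $\alpha(p+1)-1=\theta$). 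Hence $\Delta w\ge 0$ on $\{w>0\}$, and by Kato's inequality $\Delta w^+\ge \chi_{\{w>0\}}\Delta w\ge 0$, so $w^+:=\max(w,0)$ is subharmonic on all of $\IR^N$. To conclude I would estimate $w^+$ at infinity: since $0\le w^+\le c\,u^\alpha$ and $\alpha<\theta$ (as $p\theta>1$), Hölder's inequality combined with the bound $\int_{B_R}u^\theta\le CR^{N-2-\frac{2(\theta+1)}{p\theta-1}}$ of Lemma \ref{Mid_lemma} gives $\int_{B_R}w^+\le CR^{N-\E}$ for all $R\ge 1$, with $\E=\frac{\alpha}{\theta}\big(2+\frac{2(\theta+1)}{p\theta-1}\big)>0$. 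For each fixed $x$, the sub-mean value property yields $w^+(x)\le |B_R|^{-1}\int_{B_R(x)}w^+\le C_xR^{-\E}\to 0$ as $R\to\infty$, so $w^+\equiv 0$; that is $v\ge c u^\alpha$, which is (\ref{point}). This is, in essence, the argument of \cite{souplet_4}, the refinement of \cite{PHAN} being precisely what makes this non-compact final step work without the auxiliary hypothesis (``$\theta\ge 2$ or $u$ bounded'') that a previous version required.

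\emph{Part (2).} By \cite{Wei_10} we have $-\Delta u>0$, so $(u,-\Delta u)$ solves (\ref{eq}) with $p=1$, and the computation of Part (1) (which only used $p\le\theta$ and the $p=1$ case of Lemma \ref{Mid_lemma}; note $\alpha=\frac{\theta+1}{2}<\theta$ as $\theta>1$) applies once one has a bounded solution. Since $u$ itself need not be bounded, one first produces a bounded one, following \cite{Wei_dong}: pick $x_k$ with $u(x_k)\to\infty$, set $\mu_k:=u(x_k)^{-(\theta-1)/4}\to 0$, and consider $u_k(y):=u(x_k)^{-1}u(x_k+\mu_k y)$, which again solves $\Delta^2 u_k=u_k^\theta$ in $\IR^N$, is stable (both the equation and the stability inequality (\ref{standard}) are scale invariant), and has $u_k(0)=1$; the integral estimates together with interior elliptic regularity give, along a subsequence, $u_k\to\tilde u$ in $C^4_{\mathrm{loc}}(\IR^N)$, where $\tilde u\ge 0$ is a nontrivial bounded stable solution with $\tilde v:=-\Delta\tilde u>0$, and Part (1) applied to $\tilde u$ gives $(\theta+1)\tilde v^2\ge 2\tilde u^{\theta+1}$.

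\emph{Main obstacle.} Both parts turn on the non-compactness of $\IR^N$, aggravated by the possible unboundedness of $u$. In Part (1) this is exactly why the strong maximum principle on $\{w>0\}$ is not enough and one must couple the subharmonicity of $w^+$ with the decay forced by Lemma \ref{Mid_lemma}; carrying this out with no extra restriction on $\theta$ or on $u$ is the content of \cite{PHAN}. In Part (2) the delicate step is the construction of the bounded limit $\tilde u$ — one must preclude degeneration of the rescaled sequence and verify that the limit is genuinely bounded, nontrivial and still stable — which is done in \cite{Wei_dong}.
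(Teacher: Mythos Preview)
The paper gives no proof of this lemma at all; both parts are simply quoted from the cited references (\cite{PHAN}, \cite{souplet_4} for part~(1), \cite{Wei_dong} for part~(2)). So there is nothing in the paper to compare your argument against, and I can only comment on the sketch itself.

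Your Part~(1) is correct and is precisely Souplet's argument, with the refinement that makes the non-compact step go through unconditionally: the subharmonicity of $w^+$ via Kato's inequality, together with the growth bound $\int_{B_R}u^\alpha\le CR^{N-\E}$ obtained by H\"older from Lemma~\ref{Mid_lemma} and $\alpha<\theta$, kills $w^+$ through the sub-mean-value inequality. One cosmetic point: you apply the mean-value inequality on $B_R(x)$ but bound the integral on $B_R(0)$; the inclusion $B_R(x)\subset B_{R+|x|}(0)$ justifies the $x$-dependent constant $C_x$ you write.

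For Part~(2) your outline is the right one, but the sentence ``the integral estimates together with interior elliptic regularity give, along a subsequence, $u_k\to\tilde u$ \ldots\ bounded'' is not a proof as written. Picking $x_k$ \emph{arbitrarily} with $u(x_k)\to\infty$ does not yield uniform local $L^\infty$ control on the rescaled $u_k$: the integral estimates of Lemma~\ref{Mid_lemma} or Lemma~\ref{Wei_dong_init} are centred at the origin, and after translation by $x_k$ and rescaling by $\mu_k$ they pick up an uncontrolled factor in $|x_k|$. One needs a device---typically a doubling-type selection of $x_k$ so that $u_k\le 2$ on balls of radius tending to infinity---to force the limit to be globally bounded. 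You correctly flag this as the delicate step and defer it to \cite{Wei_dong}, which is exactly what the paper does; just be aware that the specific mechanism you name (integral bounds plus local regularity) is not by itself sufficient. Once a bounded $\tilde u$ is in hand, your Part~(1) computation indeed applies verbatim at $p=1$ (all the inequalities used there hold for $p\ge 1$, and $\alpha=\tfrac{\theta+1}{2}<\theta$ since $\theta>1$), giving (\ref{point_four}).
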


\begin{remark}   When attempting to prove the nonexistence of positive stable solutions $(u,v)$ of (\ref{eq}), in the case $p=1$, we can use the above lemma 2) to  assume that $u$ is bounded.

\end{remark}

The following lemma transforms our notion of a stable solution of (\ref{eq}) into an inequality which allows the use of arbitrary test functions.  A bounded domain version of this was proven in \cite{craig_lane}  but we include the proof here for the readers sake.  We remark that this result was motivated by a similar result in \cite{craig2}.

\begin{lemma} \cite{craig_lane} \label{stabb}

 Let $(u,v)$ denote a stable solution of (\ref{eq_2}).  Then
\begin{equation} \label{second}
 \int \sqrt{f'(v) g'(u)} \phi^2 \le \int | \nabla \phi|^2
\end{equation} for all $ \phi \in C_c^\infty(\IR^N)$.

\end{lemma}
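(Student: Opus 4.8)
The plan is to exploit the given stability data $(\zeta,\chi)$ as positive supersolutions and run the standard ``logarithmic substitution'' argument that converts a stable solution into a Hardy-type inequality. Concretely, by definition of stability there exist smooth $\zeta,\chi>0$ with $-\Delta\zeta=f'(v)\chi$ and $-\Delta\chi=g'(u)\zeta$ in $\IR^N$. The key pointwise inequality I would establish is
\begin{equation*}
\frac{-\Delta\zeta}{\zeta}+\frac{-\Delta\chi}{\chi}\ \ge\ 2\sqrt{\frac{f'(v)\chi}{\zeta}\cdot\frac{g'(u)\zeta}{\chi}}\ =\ 2\sqrt{f'(v)g'(u)},
\end{equation*}
which is just the AM-GM inequality applied to the two ratios $f'(v)\chi/\zeta$ and $g'(u)\zeta/\chi$ (the cross terms $\zeta/\chi$ cancel, which is the whole point of taking the geometric mean). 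So it suffices to show that for \emph{any} smooth positive supersolution-type function $w$ one has $\int \frac{-\Delta w}{w}\phi^2\le\int|\nabla\phi|^2$ for all $\phi\in C_c^\infty(\IR^N)$, and then apply this to $w=\zeta$ and $w=\chi$ separately and add.

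For that last ingredient I would use the substitution $\psi=\phi/\sqrt{w}$ (legitimate since $w>0$ and smooth), or equivalently test the equation for $w$ against $\phi^2/w$. The computation is the classical one: writing $\phi^2/w$ as a test function against $-\Delta w$ gives
\begin{equation*}
\int\frac{-\Delta w}{w}\phi^2=\int\nabla w\cdot\nabla\!\left(\frac{\phi^2}{w}\right)=\int\left(\frac{2\phi}{w}\nabla w\cdot\nabla\phi-\frac{\phi^2}{w^2}|\nabla w|^2\right),
\end{equation*}
and then the elementary bound $\frac{2\phi}{w}\nabla w\cdot\nabla\phi\le|\nabla\phi|^2+\frac{\phi^2}{w^2}|\nabla w|^2$ (again AM-GM, on the two vectors $\nabla\phi$ and $\frac{\phi}{w}\nabla w$) makes the $|\nabla w|^2$ terms cancel and leaves exactly $\int|\nabla\phi|^2$. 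Summing the resulting inequalities for $\zeta$ and $\chi$, and invoking the pointwise AM-GM bound above, yields
\begin{equation*}
2\int\sqrt{f'(v)g'(u)}\,\phi^2\ \le\ \int\frac{-\Delta\zeta}{\zeta}\phi^2+\int\frac{-\Delta\chi}{\chi}\phi^2\ \le\ 2\int|\nabla\phi|^2,
\end{equation*}
and dividing by $2$ gives (\ref{second}).

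The only genuine subtlety—and what I expect to be the main thing to check carefully rather than the main \emph{obstacle}—is justifying the integration by parts on all of $\IR^N$: since $\phi$ has compact support and $\zeta,\chi,w$ are smooth and strictly positive, $\phi^2/w$ is a legitimate compactly supported Lipschitz (indeed smooth) test function, so there are no boundary terms and no decay assumptions on $\zeta,\chi$ are needed; this is exactly why one phrases stability via the existence of $\zeta,\chi$ rather than via a quadratic form. Everything else is two applications of AM-GM. I would present it by first stating and proving the scalar lemma (for a single positive $w$ solving $-\Delta w = c\,w$ with $c\ge 0$, or more precisely just using $-\Delta w/w$ directly), then adding the two instances and applying the cross-cancellation. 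This mirrors the bounded-domain argument of \cite{craig_lane} and the scalar analogue in \cite{craig2}, the difference being only that here there is no boundary and no Dirichlet condition to worry about.
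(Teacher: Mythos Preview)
Your proposal is correct and follows essentially the same approach as the paper: both use the scalar inequality $\int \frac{-\Delta w}{w}\phi^2 \le \int|\nabla\phi|^2$ applied to $w=\zeta$ and $w=\chi$, together with AM--GM to collapse the cross ratio $\chi/\zeta$. The only cosmetic difference is that the paper initially allows two distinct test functions $\phi,\psi$ and introduces a parameter $t$ in the AM--GM step (choosing $2t=\chi/\zeta$ and then $\phi=\psi$), whereas you apply AM--GM pointwise from the start with a single $\phi$; the content is identical.
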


\begin{proof} Let  $(u,v)$ denote a stable solution of (\ref{eq_2}) and so there is some $ 0 < \zeta, \chi $ smooth such that
\[ \frac{-\Delta \zeta}{\zeta} = f'(v) \frac{\chi}{\zeta}, \qquad \frac{-\Delta \chi}{\chi} =  g'(u) \frac{\zeta}{\chi}, \qquad \mbox{ in $ \IR^N$.}\]  Let $ \phi,\psi \in C_c^\infty(\IR^N)$ and multiply the first equation by $ \phi^2$ and the second by $\psi^2$  and integrate over $ \IR^N$ to arrive at
\[ \int  f'(v) \frac{\chi}{\zeta} \phi^2 \le \int | \nabla \phi|^2, \qquad \int  g'(u) \frac{\zeta}{\chi} \psi^2 \le \int | \nabla \psi|^2,\]  where we have utilized the result that  for any sufficiently smooth $ E>0$ we have
\[ \int \frac{-\Delta E}{E} \phi^2 \le \int | \nabla \phi|^2,\] for all $ \phi \in C_c^\infty(\IR^N)$.   We now add the inequalities to obtain
\begin{equation} \label{thing}
 \int ( f'(v)  \phi^2)   \frac{\chi}{\zeta}  + ( g'(u)  \psi^2 )\frac{\zeta}{\chi} \le \int | \nabla \phi|^2 + | \nabla \psi|^2.
\end{equation}   Now note that
\[ 2 \sqrt{ f'(v) g'(u)} \phi \psi \le 2t  f'(v) \phi^2 + \frac{1}{2t}  g'(u) \psi^2,\] for any $ t>0$.  Taking $ 2t = \frac{\chi(x)}{\zeta(x)}$ gives
\[ 2 \sqrt{  f'(v) g(u)} \phi \psi \le ( f'(v)  \phi^2)   \frac{\chi}{\zeta}  + ( g'(u)  \psi^2 )\frac{\zeta}{\chi},\]  and putting this back into (\ref{thing}) gives the desired result after taking $ \phi=\psi$.
\end{proof}

 For integers $  k \ge -1$  define $ R_k:= 2^k R$ for  $ R >0$.

\begin{lemma}  \label{one}  Suppose $ (u,v)$ is a smooth, positive stable solution of (\ref{eq}) satisfying the hypothesis from Lemma \ref{pointwise}. Then for all $ t_0^- < t <t^+_0$ there is some $C_t< \infty$ such that
\[ \int_{B_{R_k}} u^\theta v^{2t-1}  \le \frac{C_t}{2^{2k}R^2} \int_{B_{R_{k+1}}} v^{2t} ,\] for all $ 0<R <\infty$.

\end{lemma}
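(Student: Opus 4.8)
The plan is to feed the stability inequality of Lemma~\ref{stabb} into a test-function argument with $\phi=v^{t}\gamma$. Applying Lemma~\ref{stabb} to (\ref{eq}), i.e. with $f(v)=v^{p}$ and $g(u)=u^{\theta}$, gives
\[ \sqrt{p\theta}\int u^{\frac{\theta-1}{2}}v^{\frac{p-1}{2}}\phi^{2}\le\int|\nabla\phi|^{2},\qquad \phi\in C_c^\infty(\IR^N). \]
By Lemma~\ref{pointwise} (passing, when $p=1$, to the bounded solution it furnishes) we have $(\theta+1)v^{p+1}\ge(p+1)u^{\theta+1}$ in $\IR^N$, which is exactly the pointwise bound $u^{\frac{\theta-1}{2}}v^{\frac{p-1}{2}}\ge\big(\tfrac{p+1}{\theta+1}\big)^{1/2}u^{\theta}v^{-1}$. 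Writing $\mu:=\sqrt{\tfrac{p\theta(p+1)}{\theta+1}}$ and inserting this yields
\[ \mu\int u^{\theta}v^{-1}\phi^{2}\le\int|\nabla\phi|^{2},\qquad \phi\in C_c^\infty(\IR^N). \]
Now fix $t\in(t_0^-,t_0^+)$ and a standard cut-off $0\le\gamma\le1$ with $\gamma\equiv1$ on $B_{R_k}$, $\gamma=0$ outside $B_{R_{k+1}}$, $|\nabla\gamma|\le C/(2^{k}R)$ and $|\Delta\gamma|\le C/(2^{k}R)^{2}$; since $v>0$ is smooth, $\phi:=v^{t}\gamma\in C_c^\infty(\IR^N)$ is admissible and $\int u^{\theta}v^{-1}\phi^{2}=L:=\int u^{\theta}v^{2t-1}\gamma^{2}$.

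The point is that $\int|\nabla(v^{t}\gamma)|^{2}$ can be reduced, via two integrations by parts, to a linear expression in $L$ and $\int_{B_{R_{k+1}}}v^{2t}$. Expanding, $|\nabla(v^{t}\gamma)|^{2}=t^{2}v^{2t-2}|\nabla v|^{2}\gamma^{2}+2t\,v^{2t-1}\gamma\,\nabla v\cdot\nabla\gamma+v^{2t}|\nabla\gamma|^{2}$. Testing $-\Delta v=u^{\theta}$ against $v^{2t-1}\gamma^{2}$ and integrating by parts gives the identity $(2t-1)\int v^{2t-2}|\nabla v|^{2}\gamma^{2}+2\int v^{2t-1}\gamma\,\nabla v\cdot\nabla\gamma=L$; since $t>t_0^->\tfrac12$ this solves for $\int v^{2t-2}|\nabla v|^{2}\gamma^{2}$. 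For the cross term, $v^{2t-1}\nabla v=\tfrac1{2t}\nabla(v^{2t})$ and $\gamma\nabla\gamma=\tfrac12\nabla(\gamma^{2})$, so a further integration by parts gives $\int v^{2t-1}\gamma\,\nabla v\cdot\nabla\gamma=-\tfrac1{4t}\int v^{2t}\Delta(\gamma^{2})$, which together with $|\Delta(\gamma^{2})|\le C/(2^{k}R)^{2}$ shows that both the cross term and $\int v^{2t}|\nabla\gamma|^{2}$ are $O\!\big((2^{k}R)^{-2}\int_{B_{R_{k+1}}}v^{2t}\big)$. Substituting back into the stability inequality and collecting the coefficient of $L$, one arrives at
\[ \Big(\mu-\frac{t^{2}}{2t-1}\Big)L\;\le\;\frac{C_t}{(2^{k}R)^{2}}\int_{B_{R_{k+1}}}v^{2t}. \]

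It remains to check that $\mu-\tfrac{t^{2}}{2t-1}>0$ on $(t_0^-,t_0^+)$, and this is precisely where the definitions of $t_0^\pm$ enter: from $(t_0^\pm-\sqrt z)^{2}=z-\sqrt z$ with $z=\mu^{2}$ one reads off that $t_0^-,t_0^+$ are the two roots of $x^{2}-2\mu x+\mu=0$, so for $t_0^-<t<t_0^+$ one has $t^{2}-2\mu t+\mu<0$, i.e. $\mu(2t-1)>t^{2}$, i.e. $\mu-\tfrac{t^{2}}{2t-1}>0$ (using $2t-1>0$). Dividing by this positive constant and bounding $L\ge\int_{B_{R_k}}u^{\theta}v^{2t-1}$ (as $\gamma\equiv1$ there and $u^{\theta}v^{2t-1}\ge0$) gives $\int_{B_{R_k}}u^{\theta}v^{2t-1}\le\tfrac{C_t}{2^{2k}R^{2}}\int_{B_{R_{k+1}}}v^{2t}$ for every $R>0$, which is the assertion.

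The only genuinely delicate point is recognizing that the algebra closes exactly on the interval $(t_0^-,t_0^+)$: the quadratic $x^{2}-2\mu x+\mu$ governing the sign of the coefficient of $L$ has precisely $t_0^\pm$ as its roots, with $\mu=\sqrt{p\theta(p+1)/(\theta+1)}$ being the constant produced by combining stability with the comparison of Lemma~\ref{pointwise}. Everything else is routine: the two integrations by parts, the cut-off estimates, the harmlessness of the weights $v^{2t-1},v^{2t-2}$ (legitimate since $v$ is smooth and strictly positive, hence bounded away from $0$ on compact sets), and, when $p=1$, the preliminary passage to the bounded solution provided by Lemma~\ref{pointwise}.
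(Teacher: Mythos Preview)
Your proof is correct and follows essentially the same route as the paper's: insert $\phi=v^{t}\gamma$ into the stability inequality from Lemma~\ref{stabb}, use the pointwise comparison of Lemma~\ref{pointwise} to replace $u^{(\theta-1)/2}v^{(p+1)/2}$ by a multiple of $u^{\theta}$, test $-\Delta v=u^{\theta}$ against $v^{2t-1}\gamma^{2}$ to eliminate the gradient term, and conclude via the sign of $\mu-\tfrac{t^{2}}{2t-1}$. The only differences are cosmetic --- you apply the pointwise bound before specializing $\phi$ rather than after, and you make explicit (as the paper does not) that $t_{0}^{\pm}$ are precisely the roots of $x^{2}-2\mu x+\mu$, which is a nice clarification.
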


\begin{proof}  Let $ (u,v)$ denote a positive smooth stable solution of (\ref{eq}).    Let $ \gamma$ denote a smooth cut-off function which is compactly supported in $ B_{R_{k+1}}$ and which is equal to one in $ B_{R_k}$.  Put $ \phi:= v^t \gamma$ into (\ref{second}) to obtain
\[ \sqrt{p \theta} \int v^\frac{p-1}{2} u^\frac{\theta-1}{2} v^{2t} \gamma^2 \le
  t^2 \int v^{2t-2} | \nabla v|^2 \gamma^2  + \int v^{2t} | \nabla \gamma|^2 + 2 t \int v^{2t-1} \gamma \nabla v \cdot \nabla \gamma.\]   We now re-write the left hand side as
  \[ \sqrt{p \theta} \int u^\frac{\theta-1}{2} v^\frac{p+1}{2} v^{2t-1} \gamma^2,\] and we now use the the pointwise bound (\ref{point}) to see the left hand side is greater than or equal to
  \[  \sqrt{ \frac{p \theta (p+1)}{\theta+1}} \int u^\theta v^{2t-1} \gamma^2,\] and hence we obtain
  \[ \sqrt{ \frac{p \theta (p+1)}{\theta+1}} \int u^\theta v^{2t-1} \gamma^2 \le  t^2 \int v^{2t-2} | \nabla v|^2 \gamma^2  + \int v^{2t} | \nabla \gamma|^2 + 2 t \int v^{2t-1} \gamma \nabla v \cdot \nabla \gamma.\]
  Multiply $ -\Delta v = u^\theta$ by $ v^{2t-1} \gamma^2$ and integrate by parts to obtain, after some rearrangement
  \[ t^2 \int v^{2t-2} | \nabla v|^2 \gamma^2 \le \frac{t^2}{2t-1}  \int u^\theta v^{2t-1} \gamma^2 - \frac{2 t^2}{2t-1} \int v^{2t-1} \gamma \nabla v \cdot \nabla \gamma.\]  We now use this to replace the first term on the right in the above inequality to obtain
  \[ \left( \sqrt{ \frac{p \theta (p+1)}{\theta+1}} - \frac{t^2}{2t-1} \right) \int u^\theta v^{2t-1} \gamma^2 \le \int v^{2t} | \nabla \gamma|^2 - \frac{t-1}{2(2t-1)} \int v^{2t} \Delta (\gamma^2),  \] and from this we easily get the desired result after considering the support of $ \gamma$ and how $ | \nabla \gamma|^2 $ and $ \Delta \gamma$ scale.

\end{proof}

In what follows we  shall need the following result,
 which is just an $L^1$ elliptic regularity result with the natural scaling.

\begin{lemma}  \label{reg}  For any integer $k \ge 0$ and $ 1 \le \alpha < \frac{N}{N-2}$ there is some $ C=C(k,\alpha)< \infty$ such that for any smooth $ w \ge 0$ we have
\[ \left( \int_{B_{R_k}} w^\alpha dx \right)^\frac{1}{\alpha} \le C
R^{2 + N ( \frac{1}{\alpha}-1)} \int_{B_{R_{k+1}}} | \Delta w| + C R^{ N( \frac{1}{\alpha}-1)} \int_{B_{R_{k+1}}} w.
 \]
\end{lemma}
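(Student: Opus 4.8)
The plan is to reduce the statement to the standard $L^1 \to L^\alpha$ estimate for the Laplacian on a fixed domain and then track the dependence on $R$ by a scaling argument. First I would fix $k \ge 0$ and work on the annulus of radii $R_k$ and $R_{k+1} = 2 R_k$; since the ratio $R_{k+1}/R_k = 2$ is independent of $k$, the geometry of the pair of balls $(B_{R_k}, B_{R_{k+1}})$ is, after rescaling to unit size, the same configuration $(B_{1}, B_{2})$ for every $k$, so the constant produced by the rescaled estimate depends only on $\alpha$ (and trivially on $k$ only through the choice of center, which we may take to be fixed). Concretely, given $w \ge 0$ smooth, decompose $w = w_1 + w_2$ on $B_{R_{k+1}}$ where $w_1$ solves $-\Delta w_1 = -\Delta w$ in $B_{R_{k+1}}$ with $w_1 = 0$ on the boundary, and $w_2$ is harmonic in $B_{R_{k+1}}$ with $w_2 = w$ on the boundary. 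For $w_1$ I would invoke the classical fact (Brezis–Strauss type $L^1$ regularity, or Stampacchia duality) that $\| w_1 \|_{L^\alpha(B_{R_{k+1}})} \le C(\alpha) R^{2 + N(1/\alpha - 1)} \| \Delta w \|_{L^1(B_{R_{k+1}})}$ for $1 \le \alpha < \frac{N}{N-2}$, the exponent on $R$ being exactly what is forced by dimensional analysis (the Green's function of the Dirichlet Laplacian on a ball of radius $\rho$ scales like $\rho^{2-N}$ and integrating against an $L^1$ datum over a set of measure $\sim \rho^N$, then taking the $L^\alpha$ norm, yields $\rho^{2-N} \cdot \rho^{N} \cdot \rho^{-N + N/\alpha} = \rho^{2 + N/\alpha - N}$). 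For $w_2$, interior estimates for harmonic functions give $\| w_2 \|_{L^\infty(B_{R_k})} \le C R^{-N} \| w_2 \|_{L^1(B_{R_{k+1}})}$ by the mean value property together with the fact that $B_{R_k}$ sits well inside $B_{R_{k+1}}$; since $w_2 = w - w_1$ and $\| w_1 \|_{L^1} \le C R^2 \| \Delta w\|_{L^1}$ by the same Green's function bound with $\alpha = 1$, we get $\| w_2 \|_{L^\infty(B_{R_k})} \le C R^{-N} \| w \|_{L^1(B_{R_{k+1}})} + C R^{2-N} \| \Delta w \|_{L^1(B_{R_{k+1}})}$, and then $\| w_2 \|_{L^\alpha(B_{R_k})} \le C R^{N/\alpha} \| w_2 \|_{L^\infty(B_{R_k})}$ absorbs into the two claimed terms.

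The cleanest way to organize this rigorously is to substitute $w(x) = \tilde w(x / R_k)$ (placing the common center at the origin), so that $\Delta w (x) = R_k^{-2} (\Delta \tilde w)(x/R_k)$ and all integrals transform by powers of $R_k$; apply the unit-scale estimate $\| \tilde w \|_{L^\alpha(B_1)} \le C(\alpha)\big( \| \Delta \tilde w \|_{L^1(B_2)} + \| \tilde w \|_{L^1(B_2)} \big)$, which is a fixed statement with a fixed constant depending only on $\alpha$ and $N$; and then change variables back, collecting the exponents $2 + N(1/\alpha - 1)$ on the $\|\Delta w\|_{L^1}$ term and $N(1/\alpha - 1)$ on the $\|w\|_{L^1}$ term exactly as stated. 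The restriction $\alpha < \frac{N}{N-2}$ is precisely the range in which the Newtonian potential of an $L^1$ function lies in $L^\alpha_{\mathrm{loc}}$, so it enters only in the fixed unit-scale lemma.

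I do not expect a genuine obstacle here: this is a soft scaling argument on top of textbook elliptic regularity, and the only care needed is bookkeeping of the powers of $R$ and noting that the dyadic structure $R_{k+1} = 2 R_k$ keeps the pair of balls uniformly comparable so that no constant blows up with $k$. The mildest subtlety is that one should state the unit-scale inequality for the pair $(B_1, B_2)$ (not $(B_1, B_1)$), since we need a slightly larger ball to run the harmonic-function interior estimate; but this is exactly why the lemma is phrased with $B_{R_k}$ on the left and $B_{R_{k+1}}$ on the right. If one prefers to avoid the explicit Green's function computation, an alternative is to cite directly an $L^1$ elliptic estimate with scaling from a standard reference and simply verify the exponents by dimensional analysis, which is what I would do to keep the proof short.
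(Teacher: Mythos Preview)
Your proposal is correct and uses essentially the same ingredients as the paper: a scaling reduction to the fixed pair $(B_1,B_2)$, the $L^1\!\to\! L^\alpha$ elliptic estimate for $\alpha<\frac{N}{N-2}$, and the decomposition $w=w_1+w_2$ with $w_1$ the Dirichlet solution and $w_2$ harmonic. The only organizational difference is that the paper first multiplies by a cutoff (producing an extra $\int_{B_{3/2}}|\nabla w|$ term) and then invokes the decomposition to control that gradient, whereas you apply the decomposition directly and estimate each piece separately---your route is slightly more streamlined but not substantively different.
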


We give a brief sketch of the proof even though the result is well known.

\begin{proof}  After a scaling argument it is sufficient to show there is some $C>0$ such that
\[ \left( \int_{B_1} w^\alpha dx \right)^\frac{1}{\alpha} \le C \int_{B_2} |\Delta w| + C \int_{B_2} w,\] for all smooth nonnegative $ w$.  Let $ 0 \le \phi \le 1$ denote a smooth cut off with $ \phi=1$ in $B_1$ and compactly supported in $ B_\frac{3}{2}$ and set $ v=w \phi$.  Then note that $L^1$ elliptic regularity theory gives  $ \| v\|_{L^\alpha(B_2)} \le C \| \Delta v\|_{L^1(B_2)}$  and writing this out gives
\[ \| w\|_{L^\alpha(B_1)} \le C \int_{B_2} |\Delta w| +  C \int_{B_2} w + C \int_{B_\frac{3}{2}} | \nabla w|,\] where $C$ is a changing constant independent of $w$.  To finish the proof we just need to control the first order term on the right.  We decompose $w$ as $ w=w_1 +w_2$ where $ \Delta w_1 = \Delta w$ in $B_2$ with $ w_1=0$ on $B_2$ and where $ w_2$ is harmonic in $ B_2$ with $ w_2=w$ on $ \partial B_2$.  Then by elliptic regularity theory $ \| \nabla w_1 \|_{L^1(B_2)} \le C \| \Delta w\|_{L^1(B_2)}$ and since $w_2$ is harmonic $ \| \nabla w_2 \|_{L^1(B_\frac{3}{2})} \le C \| w_2\|_{L^1(B_2)}$.   Combining these results gives
\[ \int_{B_\frac{3}{2}} | \nabla w| \le C \int_{B_2} |\Delta w| + C \int_{B_2} w + C \int_{B_2} |w_1|,\]  and the last term on the right can be controlled by $ \| \Delta w\|_{L^1(B_2)}$.  Recombining the results completes the proof.

\end{proof}

We will bootstrap the following result which is the key to obtain higher integral powers of $v$ controlled by lower powers and, as mentioned in the brief outline,  this is key in obtaining better nonexistence results.

\begin{prop} \label{initial}  Let $ (u,v)$ denote a positive stable solution of (\ref{eq}) with $ 1 \le p < \theta$.  Then for all $ 1 < \alpha < \frac{N}{N-2}$, $ t_0^- <t< t_0^+$ and nonnegative integers $k$ there is  some $  C< \infty$ such that for all $ R \ge 1$ we have
\begin{equation} \label{final}
\left( \int_{B_{R_k}} v^{ 2 t \alpha} \right)^\frac{1}{2 t \alpha} \le C R^{ \frac{N}{2t} ( \frac{1}{\alpha}-1)} \left(   \int_{B_{R_{k+3}}} v^{2t} \right)^\frac{1}{2 t}.
\end{equation}
\end{prop}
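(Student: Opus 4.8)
The plan is to combine the pointwise/integral estimates of Lemma~\ref{one} with the scaled $L^1$ elliptic regularity of Lemma~\ref{reg}, applied to the function $w=v^p$, and then iterate. First I would fix $1<\alpha<\frac{N}{N-2}$, a parameter $t$ with $t_0^-<t<t_0^+$, and an integer $k\ge 0$, and set $w:=v^p$. Since $-\Delta v=u^\theta\ge 0$ we have $w=v^p>0$ smooth and
\[
-\Delta w = -\Delta(v^p) = p(1-p)v^{p-2}|\nabla v|^2 + p v^{p-1} u^\theta,
\]
so that $|\Delta w|\le p v^{p-1}u^\theta + p(p-1)v^{p-2}|\nabla v|^2$ when $p\ge 1$ (and $|\Delta w|\le p v^{p-1}u^\theta$ when $p=1$). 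The first term on the right is exactly of the form controlled by Lemma~\ref{one} with the shift $2t-1 \rightsquigarrow p-1$; the gradient term is controlled the same way one controls $t^2\int v^{2t-2}|\nabla v|^2\gamma^2$ inside the proof of Lemma~\ref{one} (multiply $-\Delta v=u^\theta$ by $v^{p-2}\gamma^2$ and integrate by parts). This is really the technical heart: one must verify that $\int_{B_{R_{k+1}}}|\Delta w|$ is bounded by $\frac{C}{R^2}\int_{B_{R_{k+2}}}v^{p}$ plus a controlled remainder, using that $p$ lies in a range where Lemma~\ref{one} (with $2t=p$, or with the appropriate nearby exponent) applies — this is where the hypothesis structure ($1\le p<\theta$, the properties of $t_0^\pm$) is used.

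Next I would apply Lemma~\ref{reg} to $w=v^p$ on $B_{R_k}\subset B_{R_{k+1}}$ with exponent $\beta$, where $\beta$ is chosen so that $p\beta = 2t\alpha$; since we want $v^{2t\alpha}$ on the left and $v^{2t}$ on the right, and since $2t\in$ some window, this forces a relation among $p$, $t$, $\alpha$. More cleanly: apply Lemma~\ref{reg} with its own exponent parameter (call it $\alpha'$) to $w=v^p$ to get
\[
\left(\int_{B_{R_k}} v^{p\alpha'}\right)^{1/\alpha'} \le C R^{2+N(\frac{1}{\alpha'}-1)}\int_{B_{R_{k+1}}}|\Delta(v^p)| + C R^{N(\frac{1}{\alpha'}-1)}\int_{B_{R_{k+1}}} v^p.
\]
Then bound $\int_{B_{R_{k+1}}}|\Delta(v^p)|$ using the previous paragraph by $\frac{C}{R^2}\int_{B_{R_{k+2}}}v^p$, so the first term on the right collapses into $CR^{N(\frac{1}{\alpha'}-1)}\int_{B_{R_{k+2}}}v^p$ as well (the $R^2$ from elliptic regularity cancels the $R^{-2}$ from Lemma~\ref{one}). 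Thus
\[
\left(\int_{B_{R_k}} v^{p\alpha'}\right)^{1/\alpha'} \le C R^{N(\frac{1}{\alpha'}-1)}\int_{B_{R_{k+2}}} v^p .
\]
This already has the shape of (\ref{final}) with $2t=p$; to get the stated statement for general $t$ in the range I would first use Lemma~\ref{one} once more to pass from $v^p$ back up (or, symmetrically, note that Lemma~\ref{one} with a suitable $t$ lets one replace $\int v^p$ on the right by $\int v^{2t}|\nabla\gamma|^2$-type quantities) — but the cleanest route is: the exponent $R^{N(\frac{1}{\alpha'}-1)}$ needs to be recast as $R^{\frac{N}{2t}(\frac{1}{\alpha}-1)}$, which is achieved by writing $\int v^p = \left(\int v^p\cdot 1\right)$ and applying Hölder's inequality on the annulus $B_{R_{k+2}}$, together with the exponent matching $p\alpha' \leftrightarrow 2t\alpha$, $p\leftrightarrow 2t$, absorbing powers of $R$ coming from $|B_{R_{k+3}}|$.

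So the key steps, in order, are: (1) set $w=v^p$ and compute $\Delta w$, bounding $|\Delta w|$ by a first-order term plus a good term; (2) control $\int |\Delta w|$ on $B_{R_{k+1}}$ by $\frac{C}{R^2}\int_{B_{R_{k+2}}}v^p$ via the integration-by-parts trick from the proof of Lemma~\ref{one} (and Lemma~\ref{one} itself for the $u^\theta v^{p-1}$ piece); (3) feed this into Lemma~\ref{reg}, watching the $R^2$ cancellation; (4) Hölder on the annulus plus exponent bookkeeping to put the estimate in the precise normalized form (\ref{final}), which accounts for the three-fold radius inflation $R_k\to R_{k+3}$ (one step from Lemma~\ref{reg}, one from the $|\Delta w|$ estimate via Lemma~\ref{one}, one slack for the Hölder/cutoff step). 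I expect step (2) — verifying that the first-order and gradient terms arising from $\Delta(v^p)$ are genuinely absorbed, with all the exponent restrictions $t_0^-<t<t_0^+$, $1\le p<\theta$ doing exactly the work needed — to be the main obstacle; the rest is scaling bookkeeping. A secondary subtlety is ensuring $R\ge 1$ is enough to absorb the lower-order $R^{N(\frac1\alpha-1)}\int v^p$ terms into the leading term with the claimed power of $R$, since $\frac1\alpha-1<0$ means these are the dangerous terms as $R\to\infty$ — but that is precisely why the statement is phrased with the negative exponent $\frac{N}{2t}(\frac1\alpha-1)$ on the right.
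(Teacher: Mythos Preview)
There is a real gap, and it comes from your choice $w=v^p$. The proposition is stated for an \emph{arbitrary} $t\in(t_0^-,t_0^+)$, and the paper obtains this by setting $w=v^{2t}$ for that very $t$. Then $|\Delta w|\le 2t(2t-1)v^{2t-2}|\nabla v|^2+2t\,v^{2t-1}u^\theta$; Lemma~\ref{reg} gives the three terms on $B_{R_{k+1}}$; the gradient term is controlled by multiplying $-\Delta v=u^\theta$ by $v^{2t-1}\phi^2$ and integrating by parts (one radius bump to $B_{R_{k+2}}$); and then Lemma~\ref{one}, applied with exactly this $t$, kills $\int u^\theta v^{2t-1}$ at the cost of one more bump to $B_{R_{k+3}}$. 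Raising to the power $1/(2t)$ finishes. No H\"older, no exponent matching---the entire argument lives at the single exponent $2t$.

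Your route instead proves only the case $2t=p$ (since Lemma~\ref{one} with ``$2t-1\rightsquigarrow p-1$'' forces $t=p/2$, and incidentally requires the extra hypothesis $t_0^-<p/2<t_0^+$), and your step~(4) does not close the gap. Matching $p\alpha'=2t\alpha$ forces $\alpha'=2t\alpha/p$, and for $2t>p$ the requirement $\alpha'<\frac{N}{N-2}$ shrinks the admissible $\alpha$ to $\alpha<\frac{pN}{2t(N-2)}$, strictly smaller than the claimed range; in the iteration of Corollary~\ref{gone} this caps $2t\alpha$ at $\frac{pN}{N-2}$ rather than the needed $\frac{2Nt_0^+}{N-2}$. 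For $2t<p$, the H\"older inequality on $\int v^p$ goes the wrong way and gives no upper bound by $\int v^{2t}$ at all. The fix is simply to take $w=v^{2t}$ from the start---then your steps (1)--(3) already give the full statement and step~(4) is unnecessary.
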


\noindent
\textbf{Proof of Proposition \ref{initial}.}   Let $t$ and $ \alpha$ be as in the hypothesis.   Set $ w=v^{2t}$ and note that
\[ |\Delta w| \le 2t (2t-1) v^{2t-2} | \nabla v|^2 + 2 t v^{2t-1} u^\theta,\] and also note that $ 2t-1 >0$ after considering the restrictions on $t$.  From Lemma \ref{reg} we have

\begin{eqnarray} \label{eee}
\left( \int_{B_{R_k}} v^{2t \alpha} \right)^\frac{1}{\alpha} & \le & C_t R^{2+N ( \frac{1}{\alpha}-1)} \int_{B_{R_{k+1}}} v^{2t-2} | \nabla v|^2  \nonumber \\
&&+ C_t R^{2+N( \frac{1}{\alpha}-1)} \int_{B_{R_{k+1}}} v^{2t-1} u^\theta \nonumber \\
&& + C_t R^{N(\frac{1}{\alpha}-1)} \int_{B_{R_{k+1}}} v^{2t}.
\end{eqnarray}  We begin by getting an upper bound on the gradient term.
 Let $ \phi$ denote a smooth cut off with $ \phi=1$ in $ B_{R_{k+1}}$ and compactly supported in $ B_{ R_{k+2}}$.  Multiply $ -\Delta v = u^\theta$ by $ v^{2t-1} \phi^2$ and integrate  by parts and apply Young's inequality to arrive at an inequality of the form
 \[ \int v^{2t-2} | \nabla v|^2 \phi^2 \le C \int u^\theta v^{2t-1} \phi^2 + C \int v^{2t} | \nabla \phi|^2,\] and after considering the support of $ \phi$ and how $ | \nabla \phi|^2$ scales with respect to $R$ we obtain
 \begin{equation} \label{part_1}
 \int_{B_{R_{k+1}}} v^{2t-2} | \nabla v|^2 \le C \int_{B_{R_{k+2}}} u^\theta v^{2t-1} + \frac{C}{R^2} \int_{B_{R_{k+2}}} v^{2t}.
 \end{equation}   Putting (\ref{part_1}) into (\ref{eee}) gives
  \begin{equation} \label{fff}
  \left( \int_{B_{R_k}} v^{2t \alpha} \right)^\frac{1}{\alpha} \le C R^{N( \frac{1}{\alpha}-1)+2} \int_{B_{R_{k+2}}} u^\theta v^{2t-1} + C R^{N( \frac{1}{\alpha}-1)} \int_{B_{R_{k+2}}} v^{2t}.
  \end{equation}  We now use Lemma \ref{one} to eliminate the first term on the right hand side of (\ref{fff}) to obtain
  \[  \left( \int_{B_{R_k}} v^{2t \alpha} \right)^\frac{1}{\alpha} \le C R^{N ( \frac{1}{\alpha}-1)} \int_{B_{R_{k+3}}} v^{2t},\]  and the proof is complete after raising both sides to the power $ \frac{1}{2t}$.

\hfill $ \Box$

If one performs an iteration argument of the above result and pays some attention to the allowable range of the various parameters they obtain the following.

\begin{cor}  \label{gone} \begin{enumerate} \item Suppose $ (u,v)$ is a smooth stable positive solution of (\ref{eq}) satisfying the hypothesis of Theorem \ref{MAIN}. Suppose  $ 1< p < \beta < \frac{2  N t_0^+}{N-2}$.  Then there is some integer $ n \ge 1$ and $ C<\infty$ such that
\begin{equation} \label{high}
 \left( \int_{B_{R}} v^{\beta} \right)^\frac{1}{\beta} \le C R^{N( \frac{1}{\beta}-\frac{1}{p})} \left( \int_{B_{R_{3n}}} v^p \right)^\frac{1}{p},
\end{equation}  for all $ 1 \le R$.

\item  Suppose that $ (u,v)$ is a stable smooth positive solution of (\ref{eq}) with $ p=1$.  For all $ 2 < \beta < \frac{2 N}{N-2} t_0^+$ there is some $ C<\infty$ and integer $n \ge 1$ such that
\begin{equation} \label{scalar_cor}
\left( \int_{B_R} v^\beta \right)^\frac{1}{\beta} \le C R^{ N( \frac{1}{\beta}-\frac{1}{2})} \left( \int_{B_{R_{3n}}} v^2 \right)^\frac{1}{2},
\end{equation} for all $ R \ge 1$.

\end{enumerate}

\end{cor}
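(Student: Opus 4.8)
The plan is to bootstrap Proposition~\ref{initial}: a single application raises the integral exponent of $v$ from $2t$ to $2t\alpha$, with $\alpha$ free in $(1,\tfrac{N}{N-2})$, subject only to $t\in(t_0^-,t_0^+)$; so by composing finitely many applications one should be able to climb from the exponent $p$ (or $2$, in part~2) up to any prescribed $\beta<\tfrac{2Nt_0^+}{N-2}$. The bookkeeping fact that makes this clean is that the powers of $R$ telescope: if $2t=e$ and $2t\alpha=e'$ then $\tfrac{N}{2t}\bigl(\tfrac1\alpha-1\bigr)=N\bigl(\tfrac1{e'}-\tfrac1{e}\bigr)$, so along a chain of exponents the contributions to the exponent of $R$ sum to the difference of the reciprocals of the first and last exponent.

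Concretely, for part~1 I would fix $\beta\in(p,\tfrac{2Nt_0^+}{N-2})$ and produce a finite increasing chain $p=e_0<e_1<\dots<e_n=\beta$ with $\alpha_j:=e_{j+1}/e_j\in(1,\tfrac{N}{N-2})$ for each $j$ and with $t_0^-<e_j/2<t_0^+$ for every $j\le n-1$ (no constraint on $e_n/2$). For $j=0,\dots,n-1$ I would then apply Proposition~\ref{initial} with $t=e_j/2$, $\alpha=\alpha_j$ and $k=3(n-1-j)$ to obtain
\[
\left(\int_{B_{R_{3(n-1-j)}}} v^{e_{j+1}}\right)^{1/e_{j+1}}\le C_j\,R^{\,N(1/e_{j+1}-1/e_j)}\left(\int_{B_{R_{3(n-j)}}} v^{e_j}\right)^{1/e_j}.
\]
The right-hand ball of the $j$-th inequality is exactly the left-hand ball of the $(j-1)$-th, so multiplying these $n$ inequalities together telescopes the balls down to $B_{R_{3n}}$, telescopes the $R$-exponents to $N(\tfrac1\beta-\tfrac1p)$, and collapses $\prod_j C_j$ into a single constant; this is precisely $(\ref{high})$. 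Part~2 is the same argument, started at $e_0=2$ (after using Lemma~\ref{pointwise}(2) to pass to a bounded solution) and run up to any $\beta\in(2,\tfrac{2N}{N-2}t_0^+)$, which gives $(\ref{scalar_cor})$.

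What remains is to exhibit the chain; here we may assume $N\ge3$, since otherwise $(\ref{cond_syst})$ holds automatically and there is nothing to prove. One needs $t_0^-<e_0/2<t_0^+$: under the hypotheses of Theorem~\ref{MAIN} one has $t_0^-\le1\le p/2$ with the first inequality strict when $p>1$, while $t_0^-<p/2$ is assumed outright in case~2, and $p/2<t_0^+$ holds because $\theta\ge p$ forces $z:=\tfrac{p\theta(p+1)}{\theta+1}\ge p^2$ and hence $t_0^+=\sqrt z+\sqrt{z-\sqrt z}\ge p$; in part~2 one uses instead $t_0^-<1<t_0^+$, which is property~(i) since $\theta>1$. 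The one genuinely delicate point — the step I expect to be the main obstacle — is that $\beta$ is allowed as large as $\tfrac{2Nt_0^+}{N-2}$, so $e_n/2=\beta/2$ need not lie below $t_0^+$. The remedy is that the last application of Proposition~\ref{initial} only asks $e_{n-1}/2<t_0^+$, so I would choose
\[
e_{n-1}\in\Bigl(\max\{e_0,\ \tfrac{N-2}{N}\beta\},\ \min\{\beta,\ 2t_0^+\}\Bigr),
\]
an interval which is nonempty precisely because $e_0<\beta$, $e_0<2t_0^+$ and $\beta<\tfrac{2Nt_0^+}{N-2}$; then $\alpha_{n-1}=\beta/e_{n-1}\in(1,\tfrac{N}{N-2})$, every earlier $e_j$ automatically lies in $(2t_0^-,2t_0^+)$, and the intermediate exponents $e_0<\dots<e_{n-1}$ are filled in by inserting enough terms that all consecutive ratios stay below $\tfrac{N}{N-2}$.
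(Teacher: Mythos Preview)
Your proposal is correct and follows essentially the same approach as the paper: iterate Proposition~\ref{initial} along a chain of exponents, observe that the powers of $R$ telescope to $N(\tfrac1\beta-\tfrac1{e_0})$, and choose $e_0=p$ (respectively $e_0=2$) using the fact that $t_0^-<e_0/2<t_0^+$. In fact you supply more detail than the paper does---the paper simply writes down the iterated inequality (with $t_{k+1}=\alpha_k t_k$) and remarks that ``by suitably picking the $\alpha_k$'' one reaches any $2t_n\alpha_n$ arbitrarily close to $\tfrac{2Nt_0^+}{N-2}$, whereas you carry out the explicit chain construction and verify the admissible range of $e_{n-1}$; your treatment of the ball indices $R_{3(n-j)}$ is also more precise than the paper's.
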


\noindent
\textbf{Proof of Corollary \ref{gone}.}
  Let $ t_0^- <t_0 < t_0^+$ and let $ 1 \le \alpha_k < \frac{N}{N-2}$ and define $t_{k+1}=\alpha_k t_k$.   Iterating the result in Proposition \ref{initial} one obtains
 \begin{equation} \label{iter_gen}
 \left( \int_{B_{R_{3^n}}} v^{2 t_n \alpha_n} \right)^\frac{1}{2t_n \alpha_n} \le C R^{ \frac{N}{2} ( \frac{1}{t_n \alpha_n}- \frac{1}{t_0} )} \left( \int_{B_R} v^{2t_0} \right)^\frac{1}{2t_0},
  \end{equation} for all $ R \ge 1$ and all positive integers $n$ provided $ t_n < t_0^+$.  By suitably picking the $ \alpha_k$ for $ k \le n-1$ we see that $ 2 t_n \alpha_n$ can be made arbitrarily close to $ \frac{2N t_0^+}{N-2}$.     Note that when one performs the iterations that the powers of $R$ form a telescoping series and only the first and last terms don't cancel.

 We now separate the cases.  We first deal with case 1) and recall we are assuming the hypothesis from Theorem \ref{MAIN}.  So we either take $ 2 \le p < \theta$ and one can then show by a computation that $  t_0^- < \frac{p}{2}$  or we don't assume $ p \ge 2$ but we then, by hypothesis, assume $  t_0^- < \frac{p}{2}$.
 Also a computation shows that $ \frac{p}{2} <t_0^+$.    This allows one to pick $ t_0= \frac{p}{2}$.   With this choice of $ t_0$ and provided the above conditions hold on $ \alpha_k, t_k$ then (\ref{iter_gen}) gives
 \[    \left( \int_{B_{R_{3^n}}} v^{2 t_n \alpha_n} \right)^\frac{1}{2t_n \alpha_n} \le C R^{ \frac{N}{2} ( \frac{1}{t_n \alpha_n}- \frac{2}{p} )} \left( \int_{B_R} v^{p} \right)^\frac{1}{p}.\] This gives the desired result after considering the above comments on how big $2 t_n \alpha_n$ can be. One should note that we will only be interested in the case of $ \beta$ close to $\frac{2N}{N-2} t_0^+$.

 We now examine 2).  In this case we follow exactly the same argument as part 1)  but we now take $ t_0=1$,  which is allowed since $ t_0^- <1<t_0^+$.  Putting $t_0=1$ into (\ref{iter_gen}) gives the desired result.

\hfill $ \Box$

\noindent
\textbf{Completion of the proof of Theorem \ref{MAIN}.}   Suppose $(u,v)$ is a smooth positive stable solution of (\ref{eq}) and the hypothesis of Theorem \ref{MAIN} are satisfied.   Let $ p < \beta < \frac{2 Nt_0^+}{N-2}$.  Combining Corollary \ref{gone} 1) and Lemma \ref{Mid_lemma} there is some $ C< \infty $ such that
\[ \left( \int_{B_R} v^\beta \right)^\frac{1}{\beta} \le C R^{ N ( \frac{1}{\beta}- \frac{1}{p} ) + \frac{1}{p} ( N-2 - \frac{2 (p+1)}{p \theta-1})}, \]  for all $ R \ge 1$.  If this exponent is negative then after sending $ R \rightarrow \infty$ we obtain a contradiction.
Note the exponent is negative if and only if we have
\[ N < \frac{2 (\theta+1) \beta}{p \theta-1},\]  and after considering the allowable range of $ \beta$ we obtain the desired result.

\hfill $ \Box$

We now examine the case of the scalar equation; $p=1$.
 Critical to our approach in the following result:
\begin{lemma} \label{Wei_dong_init}  \cite{Wei_dong}  Suppose that $ (u,v)$ is a stable smooth positive solution of (\ref{eq}) with $ p=1$.  Then by Lemma \ref{equivalence}, $u$ is a positive stable solution of (\ref{fourth}) and then the results of \cite{Wei_dong} imply there is some $ C< \infty$ such that
\[ \int_{B_R} v^2 \le C R^{ N-4- \frac{8}{\theta-1}},\] for all $ R>0$.

\end{lemma}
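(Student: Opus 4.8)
The substantive content of this lemma is contained in \cite{Wei_dong}; here we record the reduction and outline the structure of the argument. Since $p=1$ the system (\ref{eq}) reads $-\Delta u = v$, $-\Delta v = u^\theta$, so that $v = -\Delta u$, which is positive by \cite{Wei_10}, and $u$ solves $\Delta^2 u = u^\theta$ in $\IR^N$. By Lemma \ref{equivalence} the stability of $(u,v)$ in the sense of (\ref{stand}) implies the scalar stability (\ref{standard}), that is,
\[ \theta \int u^{\theta-1} \gamma^2 \le \int (\Delta \gamma)^2 \qquad \mbox{ for all } \gamma \in C_c^\infty(\IR^N). \]
Since $v^2 = (\Delta u)^2$ and $4 + \frac{8}{\theta-1} = \frac{4(\theta+1)}{\theta-1}$, it suffices to show that any positive solution of $\Delta^2 u = u^\theta$ which is stable in the sense (\ref{standard}) satisfies $\int_{B_R} (\Delta u)^2 \le C R^{N - \frac{4(\theta+1)}{\theta-1}}$ for all $R>0$; note this is precisely the scale-invariant exponent for the quantity $\int_{B_R} (\Delta u)^2$ under the natural scaling $u \mapsto \lambda^{\frac{4}{\theta-1}} u(\lambda \cdot)$.

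The plan is the one standard for stable solutions of fourth order equations. Fix $R \ge 1$ and a cut-off $\zeta \in C_c^\infty(B_{2R})$ with $0 \le \zeta \le 1$, $\zeta \equiv 1$ on $B_R$ and $| \nabla^j \zeta | \le C R^{-j}$. First, testing (\ref{standard}) with $\gamma = \zeta$ yields the crude bound $\int_{B_R} u^{\theta-1} \le C R^{N-4}$. Next, multiplying $\Delta^2 u = u^\theta$ by $u \zeta^2$ and integrating by parts twice gives
\[ \int u^{\theta+1} \zeta^2 = \int (\Delta u)^2 \zeta^2 + 2 \int \Delta u \, \nabla u \cdot \nabla (\zeta^2) + \int u \, \Delta u \, \Delta(\zeta^2), \]
and the last two terms are absorbed using Young's inequality, a chain of nested cut-offs, and the interpolation inequality $\int | \nabla u|^2 \xi^2 \le \varepsilon \int (\Delta u)^2 \xi^2 + C_\varepsilon \int u^2 ( \xi^2 + | \nabla \xi|^2 )$, leaving $\int_{B_R} (\Delta u)^2 \le C \int_{B_{2R}} u^{\theta+1} + C R^{-4} \int_{B_{2R}} u^2$. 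Finally, to control $\int u^{\theta+1} \zeta^{2 \beta}$ one tests (\ref{standard}) with $\gamma = u^\alpha \zeta^\beta$ for a suitable $\alpha>0$ and combines the result with $\Delta^2 u = u^\theta$ multiplied by an appropriate power $u^q \zeta^{2 \beta}$; expanding all the derivatives, absorbing the error terms as before, and combining with the $L^{\theta-1}$ bound above and H\"older's inequality produces a recursion on the nested annuli $B_{R_k}$ which, after iteration, yields $\int_{B_R} (\Delta u)^2 \le C R^{N - \frac{4(\theta+1)}{\theta-1}}$.

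The delicate point is the absorption of the error terms and the closing of the iteration when $\theta$ is close to $1$, where a naive H\"older interpolation between the powers $u^2$, $u^{\theta-1}$ and $u^{\theta+1}$ degenerates; this is precisely what is carried out carefully in \cite{Wei_dong}, the pointwise comparison of \cite{PHAN} being what allows one to dispense with the auxiliary hypotheses (such as $\theta \ge 2$ or boundedness of $u$) needed in earlier treatments, as noted in the Acknowledgment. We refer the reader to \cite{Wei_dong} for the complete details, and remark that this estimate on $\int_{B_R} v^2$ is exactly the input required for the iteration in Corollary \ref{gone}(2).
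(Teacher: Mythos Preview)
Your proposal is correct and in fact goes beyond the paper: the paper gives no proof of this lemma at all, treating it as a pure citation to \cite{Wei_dong} with the reduction via Lemma~\ref{equivalence} already stated in the lemma itself. Your outline of the Wei--Ye argument is reasonable and consistent with that reference.

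One small inaccuracy worth flagging: your final paragraph attributes the removal of the auxiliary hypotheses ``$\theta\ge 2$ or $u$ bounded'' to the pointwise comparison of \cite{PHAN}, tying this to the Acknowledgment. That Acknowledgment concerns Lemma~\ref{pointwise}, and specifically its part~1 (the case $1<p\le\theta$); for the present lemma with $p=1$, the relevant reduction is Lemma~\ref{pointwise} part~2, which is due to \cite{Wei_dong} and proceeds by passing to a bounded stable solution $\tilde u$. So \cite{PHAN} plays no role here. This does not affect the correctness of your sketch, but the attribution should be adjusted.
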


To complete the proof of Theorem \ref{MAIN_four} we combine
Corollary \ref{gone} 2) and Lemma \ref{Wei_dong_init} and argue as in the proof of Theorem \ref{MAIN}. Our final result relates the usual notion of stability for the scalar equation to the systems notion of stability.

\begin{lemma}  \label{equivalence}   Suppose $(u,v)$ is a positive stable solution of (\ref{eq}) with $ 1=p<\theta$.  Then $u$ is a stable solution of (\ref{fourth}).

\end{lemma}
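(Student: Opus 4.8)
\emph{The plan} is to extract from the system notion of stability a single positive function which solves the equation linearized about $u$ and, crucially, has strictly negative Laplacian, and then to run a ground-state substitution turning the biharmonic quadratic form into a sum of manifestly nonnegative terms. Concretely: since $p=1$ we have $f(v)=v$ and $g(u)=u^\theta$ in (\ref{eq_2}), so $f'(v)=1$ and $g'(u)=\theta u^{\theta-1}$, and the definition of a stable solution of (\ref{eq_2}) furnishes smooth $\zeta,\chi>0$ with $-\Delta\zeta=\chi$ and $-\Delta\chi=\theta u^{\theta-1}\zeta$ in $\IR^N$. Applying $\Delta$ to the first identity and substituting the second gives $\Delta^2\zeta=\theta u^{\theta-1}\zeta$; thus $\zeta>0$ solves the linearized equation and in addition $-\Delta\zeta=\chi>0$. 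It is this extra sign information (not merely $\zeta>0$, but $-\Delta\zeta>0$) that drives the argument.

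Next I would fix $\gamma\in C_c^\infty(\IR^N)$ and write $\gamma=\zeta\psi$ with $\psi:=\gamma/\zeta\in C_c^\infty(\IR^N)$ (legitimate because $\zeta$ is smooth and strictly positive). Writing $P:=\zeta\Delta\psi+2\nabla\zeta\cdot\nabla\psi$, one has $\Delta\gamma=P+\psi\Delta\zeta$, hence $(\Delta\gamma)^2=P^2+2\psi\,\Delta\zeta\,P+\psi^2(\Delta\zeta)^2$, while two integrations by parts (valid since $\psi$ has compact support) give
\[
\int \theta u^{\theta-1}\gamma^2=\int \zeta\psi^2\,\Delta^2\zeta=\int \Delta(\zeta\psi^2)\,\Delta\zeta=\int\big(\psi^2\Delta\zeta+2\zeta|\nabla\psi|^2+2\psi P\big)\Delta\zeta .
\]
Subtracting, the terms $\psi^2(\Delta\zeta)^2$ and $2\psi P\,\Delta\zeta$ cancel exactly, and using $-\Delta\zeta=\chi$ one obtains
\[
\int(\Delta\gamma)^2-\int \theta u^{\theta-1}\gamma^2=\int\big(\zeta\Delta\psi+2\nabla\zeta\cdot\nabla\psi\big)^2+2\int \zeta\chi\,|\nabla\psi|^2\ \ge\ 0 ,
\]
since $\zeta>0$ and $\chi>0$. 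As $\gamma\in C_c^\infty(\IR^N)$ was arbitrary, this is precisely (\ref{standard}), so $u$ is a stable solution of (\ref{fourth}).

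The entire proof reduces to the algebraic identity in the last display, and the one non-routine point is recognising that the substitution $\gamma=\zeta\psi$ collapses $\int(\Delta\gamma)^2-\int\theta u^{\theta-1}\gamma^2$ into a perfect square plus the term $-2\int\zeta\Delta\zeta\,|\nabla\psi|^2$, which is nonnegative exactly because $-\Delta\zeta=\chi>0$. I would therefore carefully verify the expansions of $(\Delta\gamma)^2$ and of $\Delta(\zeta\psi^2)$, since the argument hinges on the exact cancellation of all the cross terms involving $\Delta\zeta$; everything else — existence of $\zeta,\chi$, smoothness, and the integrations by parts — is standard.
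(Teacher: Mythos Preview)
Your argument is correct and rests on the same ingredients as the paper's proof: from the system stability you extract a smooth $\zeta>0$ with $\Delta^2\zeta=\theta u^{\theta-1}\zeta$ and $-\Delta\zeta=\chi>0$, and then run a ground-state substitution. The only difference is in the bookkeeping: the paper integrates $\Delta^2\zeta$ against $\gamma^2\zeta^{-1}$, expands $\Delta(\gamma^2\zeta^{-1})$, and applies Young's inequality twice to control the cross terms, whereas your substitution $\gamma=\zeta\psi$ yields the exact identity
\[
\int(\Delta\gamma)^2-\int\theta u^{\theta-1}\gamma^2=\int\big(\zeta\Delta\psi+2\nabla\zeta\cdot\nabla\psi\big)^2+2\int\zeta\chi\,|\nabla\psi|^2,
\]
which is manifestly nonnegative without any estimation. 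Your version is therefore a slightly sharper packaging of the same idea; the paper's use of Young's inequality simply recovers the inequality $2ab\le a^2+b^2$ that your perfect-square expansion encodes automatically.
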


\begin{proof}     Let $ (u,v)$ be as in the hypothesis.  By definition there are smooth positive functions $ \zeta,\chi$ such that $ -\Delta \zeta = \chi,$ \; $  -\Delta \chi = \theta u^{\theta-1} \zeta$ in $ \IR^N$.    Let $ \gamma $ be smooth and compactly supported.    First note that $ -\Delta \zeta > 0$ and $ \Delta^2 \zeta = \theta u^{\theta-1} \zeta$ and so we
\begin{eqnarray*}
\int \theta u^{\theta-1} \gamma^2 &=& \int \Delta^2 \zeta ( \gamma^2 \zeta^{-1}) \\
&=& \int \Delta \zeta \Delta ( \gamma^2 \zeta^{-1}) \\
&=& 2 \int \frac{\Delta \zeta}{\zeta} | \nabla \gamma|^2 + 2 \int \frac{\Delta \zeta}{\zeta} \gamma  \Delta \gamma \\
&&+ 2 \int (\Delta \zeta) \frac{\gamma^2 | \nabla \zeta|^2}{\zeta^3} - \int \frac{ (\Delta \zeta)^2}{\zeta^2} \gamma^2 +I
\end{eqnarray*}  where
\[ I= -4 \int (\Delta \zeta) \gamma \frac{ \nabla \gamma \cdot \nabla \zeta}{\zeta^2}.\]   Using Young's inequality and the fact that $ -\Delta \zeta \ge 0$ we see that
\[ | I | \le -2 \int \frac{\Delta \zeta  | \nabla \gamma|^2}{\zeta} - 2 \int \frac{\gamma^2 | \nabla \zeta|^2 \Delta \zeta}{\zeta^3}.\]  Using this upper bound we see that
\[ \int \theta u^{\theta-1} \gamma^2 \le  2 \int \frac{ \gamma \Delta \zeta}{\zeta} \Delta \gamma - \int \frac{(\Delta \zeta)^2}{\zeta^2} \gamma^2,\]  and this is bounded above, after using Young's inequality again,  by  $ \int (\Delta \gamma)^2$,  which is the desired result.

\end{proof}

\section{Results on the half space}

In this section we are interested in Liouville theorems on the half space.   For notational convenience all integrals will be over $ \IR^N_+$ unless otherwise indicated.

Our first result shows that monotonic solutions on the half space satisfy the stability like inequality given by  (\ref{second}) but in fact note we prove slightly more.  The test functions need not be zero on the boundary of $ \IR^N_+$.

\begin{lemma} \label{toboun} Suppose $f,g$ are sufficiently smooth, positive increasing nonlinearities on $ (0,\infty)$ with $ f(0)=g(0)=0$.  Suppose $(u,v)$ is a positive solution of   $ -\Delta u = f(v)$, $ -\Delta v=g(u)$ in $ \IR^N_+$ which satisfies $ u_{x_N},v_{x_N} >0$ in $ \IR_N^+$.  Then
\[ \int_{\IR^N_+} \sqrt{f'(v) g'(u)} \phi^2 \le \int_{\IR^N_+} | \nabla \phi|^2 \qquad \forall \phi \in C^2_c(\IR^N).\]   Note the test functions need not be zero on $ \partial \IR_+^N$.
\end{lemma}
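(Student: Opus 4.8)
The plan is to mimic the proof of Lemma~\ref{stabb}, using the monotonicity of $(u,v)$ in the $x_N$--direction to manufacture the pair of positive functions required in the definition of stability, and then to keep careful track of the boundary terms that appear because $\phi$ need not vanish on $\partial \IR^N_+$.

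First I would differentiate the system in the $x_N$ variable. Assuming, as usual for bounded classical solutions, that $u,v$ are $C^2$ up to $\partial \IR^N_+$, set $\zeta:=u_{x_N}$ and $\chi:=v_{x_N}$; these are positive in $\IR^N_+$ by hypothesis, and (by Hopf's lemma, using $u=v=0$ on $\partial \IR^N_+$ and $-\Delta u, -\Delta v \ge 0$) they stay positive up to $\partial \IR^N_+$. Differentiating $-\Delta u=f(v)$ and $-\Delta v=g(u)$ gives
\[ -\Delta \zeta = f'(v)\chi, \qquad -\Delta \chi = g'(u)\zeta \qquad \mbox{in } \IR^N_+ .\]
Thus $(\zeta,\chi)$ solves, on the half space, exactly the ``stability system'' (\ref{stand}), so it remains to run the argument of Lemma~\ref{stabb} on $\IR^N_+$.

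Next, given $\phi\in C^2_c(\IR^N)$, I would multiply $-\Delta\zeta=f'(v)\chi$ by $\phi^2/\zeta$ and integrate over $\IR^N_+$. Integrating by parts and completing the square (using the pointwise bound $\nabla\zeta\cdot\nabla(\phi^2/\zeta)\le|\nabla\phi|^2$) yields
\[ \int_{\IR^N_+} f'(v)\frac{\chi}{\zeta}\,\phi^2 \le \int_{\IR^N_+}|\nabla\phi|^2 + \int_{\partial \IR^N_+}\frac{\partial_{x_N}\zeta}{\zeta}\,\phi^2 , \]
the boundary term being the only new feature compared with Lemma~\ref{stabb}, where $\phi$ was supported in the interior. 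Doing the same with the roles of $\zeta,\chi$ (and of $f,g$, $u,v$) interchanged, adding the two inequalities, and using $2\sqrt{f'(v)g'(u)}\,\phi^2\le f'(v)\frac{\chi}{\zeta}\phi^2+g'(u)\frac{\zeta}{\chi}\phi^2$, gives
\[ 2\int_{\IR^N_+}\sqrt{f'(v)g'(u)}\,\phi^2 \le 2\int_{\IR^N_+}|\nabla\phi|^2 + \int_{\partial \IR^N_+}\left(\frac{\partial_{x_N}\zeta}{\zeta}+\frac{\partial_{x_N}\chi}{\chi}\right)\phi^2 .\]

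The main obstacle is this boundary integral. To dispose of it I would observe that on $\partial \IR^N_+=\{x_N=0\}$ one has $\partial_{x_N}\zeta=u_{x_Nx_N}=\Delta u-\Delta_{x'}u=-f(v)-\Delta_{x'}u$, where $\Delta_{x'}$ is the Laplacian in the tangential variables. Since $u$ (and $v$) vanish on $\partial \IR^N_+$, all their tangential derivatives vanish there, so $\Delta_{x'}u=0$ on $\partial \IR^N_+$; and $f(v)=f(0)=0$ there. Hence $\partial_{x_N}\zeta=0$ on $\partial \IR^N_+$, and likewise $\partial_{x_N}\chi=0$, so the boundary integral vanishes and we conclude $\int_{\IR^N_+}\sqrt{f'(v)g'(u)}\,\phi^2\le\int_{\IR^N_+}|\nabla\phi|^2$. (More generally, if $u,v$ are merely constant on $\partial \IR^N_+$, then $\partial_{x_N}\zeta=-f(v)\le 0$ and $\partial_{x_N}\chi=-g(u)\le 0$ there, so the boundary integral is still non-positive, which is all that is used.) It is precisely the non-positivity of the normal derivatives of $\zeta,\chi$ on $\partial \IR^N_+$ — a consequence of the Dirichlet data together with $f(0)=g(0)=0$ and $f,g\ge 0$ — that makes the inequality hold for test functions that need not vanish on $\partial \IR^N_+$.
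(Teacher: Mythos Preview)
Your proof is correct and follows essentially the same route as the paper: differentiate the system in $x_N$ to obtain the linearized pair $(\zeta,\chi)=(u_{x_N},v_{x_N})$, multiply by $\phi^2/\zeta$ and $\phi^2/\chi$, integrate over $\IR^N_+$, and then show that the boundary integrals vanish because $u_{x_Nx_N}=-\Delta_{x'}u-f(v)=0$ (and similarly for $v$) on $\partial\IR^N_+$, after which the AM--GM step from Lemma~\ref{stabb} finishes the argument. The paper also mentions an alternate route via odd extension to $\IR^N$, but your direct computation is exactly its primary proof.
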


\begin{proof} By taking a derivative in $ x_N$ of (\ref{system_thm_half})  we see that $ -\Delta u_{x_N} = f'(v) v_{x_N}$ and $ -\Delta v_{x_N}=g'(u) u_{x_N}$ in $ \IR^N_+$.
 Let $ \phi \in C_c^\infty(\IR^N)$ and multiply the first equation by
$ \frac{\phi^2}{ u_{x_N}}$ and the second equation by $ \frac{\phi^2}{v_{x_N}}$ (and note by Hopf's Lemma that $u_{x_N},v_{x_N}>0$ on $ \partial \IR^N_+$) and integrate over the half space to obtain
\[ \int \frac{f'(v) v_{x_N} \phi^2}{u_{x_N}} \le  \int \nabla (u_{x_N}) \cdot \nabla ( \phi^2 u_{x_N}^{-1}) - \int_{\partial \IR^N_+} \partial_\nu u_{x_N} ( \phi^2 u_{x_N}^{-1}),\]
\[ \int \frac{g'(u) u_{x_N} \phi^2}{v_{x_N}} = \int \nabla (v_{x_N}) \cdot \nabla ( \phi^2 v_{x_N}^{-1}) - \int_{\partial \IR^N_+} \partial_\nu v_{x_N} ( \phi^2 v_{x_N}^{-1}),\] where $ \partial \nu$ is the outward pointing normal.
   We first examine the boundary integrals.  Note that $ \partial_\nu u_{x_N}= - u_{x_N x_N}$ and from the fact that $u,v$ are sufficiently regular to the boundary we see that
$ -u_{x_N x_N}= f(v) + \sum_{k=1}^{N-1} u_{x_k x_k} $ on $ \IR_+^N$.  Now note that by the assumption of $f$ and the boundary condition on $u$ we see that we must have $ u_{x_N x_N}=0$ on $ \partial \IR^N_+$ and hence the boundary integral is zero.  Similarly one shows the other boundary integral is also zero.      We then use Young's inequality on the integrals involving the gradients and add the results to see that
\[ \int \left(  \frac{f'(v) v_{x_N} }{u_{x_N}} + \frac{g'(u) u_{x_N} }{v_{x_N}}  \right)  \phi^2  \le 2 \int | \nabla \phi|^2.\]  We then use the argument from Lemma \ref{stabb} to obtain the desired result.   \\
An alternate proof can be given by extending the solutions and the nonlinearities to $ \IR^N$, using odd extensions.  One then has that $(u,v)$ is a monotonic solution of the extended problem on $\IR^N$ and hence is stable.   Fix $ \phi$ to be a smooth and compactly supported function in $ \IR^N_+$ but we allow $ \phi$ to be non zero on the boundary and then extend $ \phi$ to all $ \IR^N$ using an even extension.  The extension is a sufficiently regular test function which can be inserted into  (\ref{second}) and this gives the desired result after writing all the integrals over the half space.
\end{proof}

  \textbf{Proof of Theorem \ref{system_thm_half}.}
  Suppose $(u,v)$ is a bounded positive classical  solution of (\ref{eq_half}).   By a moving plane argument, see \cite{dancer_2, sirak}  one has $ u_{x_N}, v_{x_N}>0$ in $ \IR^N_+$.    For $x \in \IR^N$ we write $ x=(x',x_N)$ and we now define,  for each $ t>0$,  $ u_t(x)=u(x', x_N+t)$ and $ v_t(x)=v(x', x_N+t)$ for $x \in \IR_N^+$.    Note that $u_t,v_t$ are monotonic solutions of (\ref{eq_half}) in $ \IR^N_+$ but without any assumptions on the boundary values.    Using the same argument as in the proof of Lemma \ref{toboun} one can easily show that
   \begin{equation} \label{shoo}
   \sqrt{p \theta} \int v_t^\frac{p-1}{2} u_t^\frac{\theta-1}{2}  \le \int | \nabla \phi|^2, \qquad \forall \phi \in C_c^\infty( \IR^N_+).
      \end{equation}
      Now note that since $u$ and $v$ are bounded, monotonic and positive we see that
  \[ w_1(x'):=\lim_{t \nearrow \infty} u_t(x), \qquad w_2(x'):= \lim_{t \nearrow \infty} v_t(x),\] defined in $ \IR^{N-1}$ are positive bounded solutions of
\begin{equation}
-\Delta w_1 = w_2^p, \qquad -\Delta w_2 = w_1^\theta, \qquad \mbox{ in } \IR^{N-1}.
\end{equation}  To complete the proof we will show that $ w_1,w_2$ preserve some of the stability properties of the solutions on the half space.  We won't show $w_1,w_2$ are stable solutions but we will instead show that $w_1,w_2$ satisfy the stability like inequality given by  (\ref{second}) on $ \IR^{N-1}$.   One can then apply Theorem  \ref{MAIN} (note the only place stability is used in the proof of Theorem \ref{MAIN} is to obtain (\ref{second})).

   Let $ \phi_1 \in C_c^\infty(\IR^{N-1})$ and let $0 \le \phi_R \le 1$ be smooth and compactly supported  in $ (R,4R) \subset \IR$ where $ R \ge 1$ with $ \phi_R =1$ in $ (2R,3R)$. Note there is some positive finite $C$ such that $ | \phi'(x_{n}) | \le \frac{C}{R}$ for all $  R \ge 1$ and $ x_N \in (R,4R)$.   Define $ \phi(x)= \phi_1(x') \phi_R(x_N)$ and putting $ \phi $ into (\ref{shoo}) and using the fact that $ u_t(x) \ge u(x',t)$ and similarly for $v$ shows (after writing the integrals as iterated integrals then using some algebra) that
\begin{eqnarray*}
\sqrt{ p \theta} \int_{\IR^{N-1}} v(x',t)^\frac{p-1}{2} u(x',t)^\frac{\theta-1}{2} \phi_1(x')^2 d x' & \le & \int_{\IR^{N-1}} | \nabla \phi_1(x')|^2 d x' \\
&& + T_R \int_{\IR^{N-1}} |  \phi_1(x') |^2 d x'
\end{eqnarray*}

where
\[T_R:= \frac{ \int_{\IR} | \nabla \phi_R(x_N)|^2 d x_N   }{   \int_{\IR} \phi_R(x_N)^2 d x_N}.\]   One easily sees that $ T_R \rightarrow 0$ as $ R \rightarrow \infty$ and so sending $ R \rightarrow \infty$ and then sending $ t \rightarrow \infty$ gives the desired result.
  \hfill $\Box$

We now give some comments on the above proof.  Firstly,  this idea of relating a monotonic problem on the half space to a problem in one dimension lower on the full space has been used by many authors.     A key step in the above argument is to show the problem in $ \IR^{N-1}$ is stable.
 We first learned of this idea of showing the limiting solution is stable in the recent work \cite{new_far} where the context was a quasilinear scalar problem.  After examining the literature we realized this result is contained in \cite{Wei_dong} where they are examining a biharmonic problem.


\begin{thebibliography}{99}








\bibitem{BG} E. Berchio and F.  Gazzola, \emph{Some remarks on biharmonic elliptic problems with positive, increasing and convex nonlinearities}, Electronic J. Differential Equations 2005(2005), No. 34, 20 pp.



\bibitem{bcmr} H. Brezis, T. Cazenave, Y. Martel, A. Ramiandrisoa; \emph{Blow up for $u_t - \Delta u = g(u)$ revisited}, Adv. Diff. Eq., 1 (1996) 73-90.

\bibitem{BV} H. Brezis and L. Vazquez, \emph{Blow-up solutions of some
nonlinear elliptic problems}, Rev. Mat. Univ. Complut. Madrid 10 (1997),
no. 2, 443--469.

\bibitem{Cabre} X. Cabr\'e, \emph{Regularity of minimizers of semilinear elliptic problems up to dimension four}, Comm. Pure Appl. Math. 63 (2010), no. 10, 1362-1380.

\bibitem{CC} X. Cabr\'e and A. Capella, \emph{Regularity of radial minimizers
and extremal solutions of semilinear elliptic equations},  J. Funct. Anal.
238  (2006),  no. 2, 709--733.


 \bibitem{Caf} L. Caffarelli, B. Gidas and J. Spruck. \emph{Asymptotic symmetry and local behaviour of semilinear elliptic equations with critical Sobolev growth}. Commun. Pure Appl. Math. 42 (1989), 271–297.



\bibitem{CDG} D. Cassani, J. do O and N.  Ghoussoub, \emph{On a fourth
order elliptic problem with a singular nonlinearity}, Adv. Nonlinear Stud. {\bf 9} (2009), 177-197.






\bibitem{new_1} W. Chen, L. Dupaigne and  M. Ghergu  A new critical curve for the Lane-Emden system, preprint 2013, http://arxiv.org/abs/1302.4685






\bibitem{chen} W. Chen and C. Li, \emph{Classification of solutions of some nonlinear elliptic equations}. Duke Math. J. 63 (1991), 615–622.




\bibitem{half_1} Ph. Clement, D.G. de Figueiredo and  E. Mitidieri, \emph{Positive solutions of
semilinear elliptic systems}. Comm. Part. Diff. Eq. 17 (1992), 923-940.














\bibitem{craig_lane} C. Cowan, \emph{Regularity of stable solutions of a Lane-Emden type system}, Preprint 2012.
	
\bibitem{craig0} C. Cowan,  \emph{Regularity of the extremal solutions in a Gelfand system problem},  Advanced Nonlinear Studies,  Vol. 11, No. 3 , p. 695	Aug., 2011.

\bibitem{craig1} C. Cowan, P.  Esposito and N. Ghoussoub, \emph{Regularity of extremal solutions in fourth order nonlinear eigenvalue problems on general domains}.  Discrete Contin. Dyn. Syst.  28  (2010),  no. 3, 1033–1050.




\bibitem{CEG}  C. Cowan, P. Esposito, N. Ghoussoub and A. Moradifam    \emph{The critical
dimension for a fourth order elliptic problem with singular nonlinearity}, Arch. Ration. Mech. Anal., in press (2009) 19 pp.


\bibitem{craig2} C. Cowan and M. Fazly,  \emph{Regularity of the extremal solutions associated to some elliptic systems}, preprint, 2012.




\bibitem{advection} C. Cowan and N. Ghoussoub, \emph{Regularity of the extremal solution in a MEMS model with advection}. Methods Appl. Anal. (2008) 8pp.




\bibitem{craig_four} C. Cowan and N. Ghoussoub, \emph{Regularity of semi-stable solutions to fourth order nonlinear eigenvalue problems on general domains}, Preprint, 2012.




\bibitem{CR} M.G. Crandall and P.H. Rabinowitz, \emph{Some continuation and
variation methods for positive solutions of nonlinear elliptic eigenvalue
problems}, Arch. Rat. Mech. Anal., 58 (1975), pp.207-218.


\bibitem{dancer_2} E.N. Dancer, \emph{Moving plane methods for systems on half spaces}, Math. Ann. 342 (2008), 245-254.












\bibitem{DDGM}  J. D\'avila, L. Dupaigne, I. Guerra and M. Montenegro,
\emph{Stable solutions for the bilaplacian with exponential nonlinearity},
SIAM J. Math. Anal. 39 (2007), 565-592.






\bibitem{new_2} J. D\'avila, L. Dupaigne, K. Wang and  Juncheng Wei, \emph{A Monotonicity Formula and a Liouville-type Theorem for a Fourth Order Supercritical Problem}, preprint 2013, http://arxiv.org/abs/1303.6059





\bibitem{DFG} J. D\'avila, I. Flores and I. Guerra, \emph{Multiplicity of solutions for a fourth order equation with power-type noninearity}, Math. Ann. 348 (2010), 143-193.



\bibitem{half_3} D.G. de Figueiredo, B. SIRAKOV, \emph{Liouville type theorems, monotonicity results and a priori bounds for positive solutions of
elliptic systems}, Math. Ann. 333 (2005), no. 2, 231-260.






 \bibitem{NEW_GELF} L. Dupaigne, M. Ghergu, O. Goubet and G. Warnault, \emph{The Gelfand problem for the biharmonic operator}, (July 18, 2012) Preprint. http://arxiv.org/abs/1207.3645




\bibitem{EGG} P. Esposito, N. Ghoussoub and Y. Guo, \emph{Compactness along
the branch of semi-stable and unstable solutions for an elliptic problem
with a singular nonlinearity},  Comm. Pure Appl. Math. {\bf 60} (2007), 1731-1768.








\bibitem{farina} A. Farina, \emph{On the classification of solutions of the Lane–Emden equation
on unbounded domains of $\IR^N$}, J. Math. Pures Appl. 87 (2007) 537-561.



\bibitem{new_far} A. Farina, L. Montoro and B. Sciunzi, \emph{Monotonicity of solutions of quasilinear degenerate elliptic equation in half-spaces}, Preprint 2012.






\bibitem{fazly_system} M.  Fazly,  \emph{Liouville-type theorems for stable solutions of certain elliptic systems}, Advanced Nonlinear Studies 12 (2012), 1–17.



\bibitem{fg}  M. Fazly, N. Ghoussoub, \emph{De Giorgi type results for elliptic systems}, Preprint, (2012).


\bibitem{fazly} M. Fazly and N. Ghoussoub, \emph{On the H\'enon-Lane-Emden conjecture}. Preprint, (2012).



\bibitem{GG} N. Ghoussoub and Y. Guo, \emph{On the partial differential
equations of electro MEMS devices: stationary case}, SIAM J. Math. Anal. 38
(2007), 1423-1449.





\bibitem{gidas}B. Gidas and J. Spruck, \emph{Global and local behavior of positive solutions of nonlinear elliptic equations.} Comm. Pure Appl. Math., 34(4), 525-598, (1981).


\bibitem{Gidas}  B. Gidas, W. Ni and L. Nirenberg, \emph{Symmetry and related properties via the maximum principle}. Commun. Math. Phys. 68 (1979), no. 3, 209-243.



\bibitem{Gui_Ni_Wang} C. Gui, W.M.  Ni  and X. Wang, \emph{On the stability and instability of positive steady states of a semilinear heat equation in $\IR^n$.}  Comm. Pure Appl. Math. 45 (1992), no. 9, 1153-1181.



\bibitem{GW} Z. Guo and J. Wei, \emph{On a fourth order nonlinear elliptic equation with negative exponent},  SIAM J. Math. Anal. 40 (2008/09), 2034--2054.






\bibitem{new_3} H. Hajlaoui, A. Harrabi and  D. Ye, \emph{On stable solutions of biharmonic problem with polynomial growth}, preprint 2012, http://arxiv.org/abs/1211.2223






\bibitem{half_2} J. Hulshof and  R.C.A.M. van der Vorst, \emph{Differential systems with strongly
indefinite variational structure}. J. Funct. Anal. 114 (1993), 32-58.















\bibitem{Martel} Y. Martel, \emph{Uniqueness of weak extremal solutions of nonlinear elliptic problems}, Houston J. Math. 23 (1997), 161-168.



\bibitem{MP} F. Mignot and J-P. Puel, \emph{ Sur une classe de problemes
non lineaires avec non linearite positive, croissante, convexe}, Comm. Partial Differential Equations 5 (1980), 791--836.


\bibitem{Mont} M. Montenegro, \emph{Minimal solutions for a class of elliptic systems},   Bull. London Math. Soc.\textbf{37} (2005) 405-416.

\bibitem{Nedev} G. Nedev, \emph{Regularity of the extremal
solution of semilinear elliptic equations,}  C. R. Acad. Sci. Paris S\'erie I
Math.  330 (2000), 997-1002.




\bibitem{quittner}  P. Pol\'a\v{c}ik, P. Quittner and P. Souplet, \emph{Singularity and decay estimates in superlinear problems via Liouville-type theorems.
Part I: Elliptic equations and systems.}   Duke Math. J. 139 (2007), no. 3, 555–579.





\bibitem{Mid} J. Serrin and  H. Zou, \emph{Non-existence of positive solutions of Lane-Emden systems}, Differential Integral
Equations 9 (1996) 635-653.



\bibitem{PHAN} Q.H. Phan, \emph{Liouville-type theorems and bounds of solutions for Hardy-Hénon elliptic systems},  Adv Diff Eq 7-8 (2012), 605-634).




\bibitem{half_4} W. Reichel and H. Zou, \emph{Non-existence results for semilinear cooperative elliptic
systems via moving spheres}, J. Differ. Equations 161 (2000), 219–243.







\bibitem{sirak} B. Sirakov, \emph{Existence results and a priori bounds for higher order elliptic equations and systems}, J. Math. Pures Appl. 89 (2008), 114-133.


\bibitem{souplet_4}    P. Souplet,  \emph{The proof of the Lane-Emden conjecture in four space dimensions.} Adv. Math. 221 (2009), no. 5, 1409-1427.





\bibitem{Wang_solo} X. Wang, \emph{On the Cauchy Problem for Reaction-Diffusion Equations},
Transactions of the American Mathematical Society, Vol. 337, No. 2 (June 1993) pp. 549-590.



\bibitem{Warn} G. Warnault, \emph{Liouville theorems for stable radial solutions for the biharmonic operator}, Asymp. Anal 69 (2010), no 1-2, 87-98.




\bibitem{Wei_10} J. Wei and X. Xu, \emph{Classification of solutions of high order conformally invariant equations,} Math. Ann.
313(2) (1999), 207-228.

\bibitem{Wei_dong} J. Wei and D. Ye, \emph{Liouville theorems for finite Morse index solutions of biharmonic problem},   To appear Mathematische Annalen.


















\end{thebibliography}
\end{document}